\newtheorem{remark}{Remark}
\newtheorem{lemma}[remark]{Lemma}
\newtheorem{theorem}[remark]{Theorem}
\newtheorem{proposition}[remark]{Proposition}
\newtheorem{corollary}[remark]{Corollary}
\newenvironment{proof}{\begin{trivlist} \item[] {\em Proof:}}{\hfill
$\otimes$\end{trivlist}}
\newtheorem{JSVdef}[remark]{Definition}
\newenvironment{definition}{\begin{JSVdef}\em}{\end{JSVdef}}
\newtheorem{JSVnote}[remark]{Note}
\newenvironment{note}{\begin{JSVnote}\em}{\end{JSVnote}}
\newtheorem{JSVex}[remark]{Example}
{\unskip\nobreak\hskip 1em plus 1fil\nobreak$\Box$
\parfillskip=0pt
\end{JSVex}}
\newcommand{\R}{\mbox{\rm {I$\!$R}}}
\newcommand{\D}{\displaystyle}
\title{
  \Large {
  \bf    Locating Two Transfer Points on a Network with a Trip Covering Criterion and Mixed Distances
 }
   }
     \author{ 
M.C. L\'opez-de-los-Mozos \thanks {
Dpto.  Matem\'atica Aplicada I.  Universidad de Sevilla,
 Spain.  mclopez@us.es}
\and Juan A. Mesa   \thanks {Dpto.  Matem\'atica Aplicada II.
Universidad de Sevilla, Spain.
jmesa@us.es}
\and Anita Sch\"{o}bel \thanks {Institut f\"{u}r Numerische and Angewandte Mathematik.
Universit\"{a}t G\"{o}ttingen, Germany.   schoebel@math.uni-goettingen.de}
}
\date{}
\begin{document}
\maketitle
\begin{abstract}

In this paper we consider a set of origin-destination pairs  in a mixed model in which a network embedded in the plane represents an alternative  high-speed transportation system, and study a trip covering problem which consists on 
locating two points in the network which maximize the number of covered pairs, that is, the number of pairs which use the network by acceding and exiting through such points.
To deal with the absence of convexity of this mixed distance function we propose a  decomposition method based on formulating a collection of  
subproblems
and solving each of them via discretization of the solution set.

\end{abstract}

\noindent
{\bf Keywords}: Location; Network; Covering Problem; Mixed Distances

\bigskip  
\section{Introduction}
In a  previous paper~\cite{KorMesa11}, a location problem on a mixed planar-network space was tackled. In the problem dealt with in that paper, there are existing facilities in the Euclidean plane whereas new facilities are to be located in a straight-line segment or in a tree network. Instead of covering facilities the objective aims at covering trips between each pair of existing facilities. These trips can be done either by using the plane with the Euclidean distance or by a combination plane-network in which the section on the network is supposed to be traversed faster than those on the plane. Therefore, there is a competition between the mode that only uses the planar distance and the combined one. Each pair of existing facilities has an associated demand and the objective is to maximize the number of trips for which the combined mode is preferable to the planar one. In this paper we extend the approach applied in~\cite{KorMesa11}  
to the case of general networks
by taking into account the loss of convexity of the distance function between pairs of points through the network.

The problem of locating stations in a railway network was indirectly tackled in~\cite{VuNe}, in which they considered the problem of optimal   interstation spacing in a commuter line. The objective of this problem was to minimize the total time of passengers going to a city center along a railway line. A commuter line competing with a freeway was considered in~\cite{Vu} in order to maximize the number of passengers on the basis if the shortest travel time. The aim was also to determine the optimal  interstation space between pairs of adjacent stations. Apart from the 
papers~\cite{LaMeOr, LaMeOrSe} in which the feasible solution space for locating stations was discrete, and the objective was to maximize the passengers and trip coverage, respectively,  several objective functions have been considered in those in which stations can be located along the edges of the railway network (\cite{Haetal, Scho, Schoetal, ReAnChurch}).
 
In order to avoid duplications  the reader is referred to the Introduction of the paper~\cite{KorMesa11} for an overview of other related papers.
 
The case of locating only one facility regarding transfer facilities already located in the network is a particular case of that dealt with in this paper and can be categorized as a conditional location problem. Thus, adding  the extra demand captured by both pairs consisting of a new facility and an already located  one, and those consisting of two new facilities, the global contribution of the two new transfer points is computed.
 
The paper is organized as follows. After this introduction the problem is formulated in Section 2. The necessary definitions and results on the distance on networks are summarized in Section 3. Section 4 is devoted to solve the two cases arisen from the properties of the distance. The paper ends with some conclusions and further research.

\section{The problem} 
Hereinafter we will use the notation introduced by~\cite{KorMesa11}.
Let ${\cal A} = \{ A_i=(a_{i}, b_i), i=1, \ldots, n \} \subset {\R}^2$ be a set of existing facilities on the plane. We assume that   travel time distances between two points in the plane can be estimated by the Euclidean metric.

Let $T= (t_{ij}) \in {\R}^{n \times n}$ be an origin-destination (O/D) matrix in which trip patterns are codified, i.e., $t_{ij}$ is the weight of the ordered  pair $(A_i, A_j)$ (or $(i, j)$, if there is no confusion). This matrix is known a priori:  for example, in a transportation context each $t_{ij}$ can be viewed as the number of trips from an origin $A_i$ to a destination $A_j$, and in a telecommunication setting  it could represent the amount of data transfered from server $i$ to server $j$.

In order to formulate the problem with a mixed mode of transportation, we consider an embedded  network ${\cal N} (V, E)$ representing a high-speed system, with $|V|$ vertices and $|E|$ edges. 
Each vertex $v \in V$ represents a junction or a node,
and we assume that each undirected edge $e \in E$  
 has a length $l_e$ and it can be modeled as a straight-line segment.
The embedding of ${\cal N}$  in the Euclidean plane  as well as the coordinates of the nodes in ${\cal N}$ will allow us to compute the travel distances between each pair of points.  Let ${\cal N}$ be the continuum set of points on the edges. The edge lengths induce a distance function $d$ on ${\cal N}$, such that for any  two points $x, y \in {\cal N}$, $d(x,y)$ is the length of any shortest path connecting $x$ and $y$. Moreover, if $x$ and $y$ are on the same edge, $d(x,y)= || x-y ||$.

For any two points $x, y \in {\cal N}$, let us define the high-speed distance $d_{\cal N} (x, y) = \alpha d(x, y)$, with $\alpha \in (0, 1)$. Parameter $\alpha$ is a speed factor. It is straightforward to see that $({\cal N}, d_{\cal N})$ is a metric space.

Given an O/D pair $(i, j)$, the transportation time by using the network is obtained by selecting  two  points  $X_1 \in {\cal N}$ and  $X_2 \in {\cal N}$ (or a 2-facility point $(X_1, X_2) \in {\cal N}^2$) such that the transportation path from $i$ to $j$ enters the network at one of such points and  exits the network at the other one. If  $X_1$ is the access point from $A_i$ and $X_2$ the exit point towards $A_j$,  the length of the travel-path $(A_i, X_1, X_2, A_j)$ is given by:
$$h_{ij}(X_1, X_2)  = ||A_i-X_1||_2 + d_{\cal N} (X_1, X_2)+ ||X_2-A_j||_2$$
Moreover, if $X_2$ is the access point from $A_i$ and $X_1$ the exit point towards $A_j$, the length of the travel-path    
$(A_i, X_2, X_1, A_j)$ is  
$$h_{ij} (X_2, X_1) = ||A_i-X_2||_2 + d_{\cal N} (X_2, X_1)+ ||X_1-A_j||_2 $$
Although  algebraically  $h_{ij}(X_1, X_2)=h_{ji}(X_2, X_1)$ and $h_{ij}(X_2, X_1)=h_{ji}(X_1, X_2)$, in this context the subindex $ij$ indicates the origin/destination pair $(i, j)$, which is different from  the O/D pair $(j, i)$. For this reason, with each O/D pair $(i, j)$ we only associate the  subindex $ij$. 

The distance between the O/D pair $(A_i, A_j)$ by using the transit points $X_1, X_2 \in {\cal N}$ is:
$$ f_{ij} (X_1, X_2) = \min \{ h_{ij}(X_1, X_2), h_{ij}(X_2, X_1)\} = f_{ij} (X_2, X_1)$$
Let $D=(d_{ij}) \in {\R}^{n \times n}$ be a symmetric matrix, with 
$0 \le d_{ij} < || A_i-A_j||_2$.  The values of $D$ represent the {\it acceptance levels} for using the network, meaning that the O/D pair $(i, j)$ chooses the high-speed network if and only if the traveling time by using it is  less than or equal to
$d_{ij}$. In  other words, the O/D pairs always choose the faster option.

\begin{definition} \label{cover}
The O/D pair $(i, j)$ is covered by  $(X_1, X_2) \in {\cal N}^2$ if and only if 
$f_{ij}(X_1, X_2) \le d_{ij}$.
\end{definition}
 Let $C(X_1, X_2)$ be the set of O/D pairs covered by $(X_1, X_2)$, that is:
$$ C(X_1, X_2) = \{ (A_i, A_j) \in {\cal A} \times {\cal A} : \, f_{ij}(X_1, X_2) \le d_{ij} \}$$
Symmetry of the acceptance level matrix  and both the Euclidean and the network distances implies that $(i, j) \in C(X_1, X_2)$ if and only if $(j, i) \in C(X_1, X_2)$.

The objective function measures the amount of trip patterns
captured by 
each 2-facility-point $(X_1, X_2) \in {\cal N}^2$, and it 
 is given by the 
trip-sum function:
$$ F(X_1, X_2) = \sum_{(i, j) \in C(X_1, X_2)} t_{ij}$$ 
Finally, the O/D-pair 2-location problem is  to find a point $(X_1, X_2) \in {\cal N}^2$ such that the sum of trip patterns of all O/D pairs covered by such a point is maximized:
$$   \begin{array}{ll} \max & F(X_1, X_2) := \D \sum_{(i, j) \in C(X_1, X_2)} t_{ij} \\[4ex]
\mbox{s.t.} &  (X_1, X_2) \in {\cal N}^2
\end{array} \eqno(1)$$

The problem (1) has been first solved  for the particular case where ${\cal N}$  is a segment of a straight line,  and later the method was  extended to  the case where 
${\cal N}$ is a tree network ${\cal T}$ (see ~\cite{KorMesa11}). However, the approach applied for the tree network case cannot be directly extended to a general network ${\cal N}$ due to the absence of convexity of distance functions on a cyclic network.  Ir order to solve the problem, we next summarize some concepts on the behavior of distance on  networks.
\section{Previous results on distances on networks}
Let $e = [u, w] \in E$ be an edge of the  network ${\cal N}$ of length $l_e=l(u, w)$, and let $P$ be a point on $e$. 
 Let $x=l(u, P)$ be the length of the subedge from the left vertex $u$ to $P$, and $l(w, P) = l_e-x$ be the length of subedge  $[P, w]$.   It is well known that for any node $v_i \in V$, the distance $d(v_i, P)= d_i(x)=\min \{ d(v_i, u)+x, d(v_i, w) + l_e-x \} $ is   
concave and piecewise linear on $x \in [0, l_e]$,  with at most two pieces with slopes $1$ and $-1$. If $l(u, w) \le | d(v_i, u)-d(v_i, w)|$, then the distance function $d(v_i, P)$ on $[u, w]$ is linear. The following concepts and definitions can be found in~\cite{HoGarChen91} and references therein.

\begin{definition} Given a point $Q \in  {\cal N}$ and an edge $[u, w]$, the point $P \in [u, w]$ at which $d(Q, P)$ is maximized is called {\it antipodal} to $Q$ in $[u, w]$.
\end{definition}

Note that if $P$ is antipodal to $Q$ does not imply that $Q$ is antipodal to $P$; if it is $P$ and $Q$ are antipodal to each other. 

\begin{definition}
A point $\bar{v}_i \in [u, w]$, other than a node,   is called an {\it arc bottleneck point} if there is a vertex $v_i$ for which  $\bar{v}_i $ is antipodal to $v_i$. In this case,  $d(v_i, u)+l(u, \bar{v}_i) = d(v_i, w)+l(w, \bar{v}_i)$. 
\end{definition}

 Clearly, $[u, w]$ contains at most $|V|$ arc bottleneck points (since each vertex $v_i$ defines at most one arc bottleneck point  $\bar{v}_i$ on the edge).  Such a point  $\bar{v}_i$ can be identified by the length of the subarc $[u, \bar{v}_i]$ given by: 
$ l(u, \bar{v}_i) = \D \frac{d(v_i, w)-d(v_i, u)+l(u, w)}{2}$.

\begin{definition} Let $B_e$ be the set of arc bottleneck points of edge $e=[u, w]$, and
let $\bar{v}_i$, $\bar{v}_j$ be two adjacent points of $B_e \cup \{u, w\}$. Then  $L_i=[ \bar{v}_i, \bar{v}_j]$, the closed subedge limited by such points, is called a {\it linear arc segment} (or a treelike 
segment~\cite{HoGarChen91}).
\end{definition}
 On each linear arc segment the distance $d(v_i, P)$ is linear. If $B_e = \emptyset$ the entire edge $[u, w]$ (including  nodes) is a linear arc segment. The set of all linear arc segments of an edge $e$ is denoted by ${\cal L}(e)$.

Let $[u_p, w_p]$ and $[u_q, w_q]$ be two edges of the network $\cal N$, and let 
$[\bar{w}_p, \bar{u}_p] \subseteq   [u_q, w_q]$, $[\bar{w}_q, \bar{u}_q] \subseteq   [u_p, w_p]$ be the subedges for which the extreme points are antipodal points as follows:     $\bar{u}_p, \bar{w}_p $ are the antipodal points to $u_p, w_p$ respectively,  and
$\bar{u}_q, \bar{w}_q$ are  the antipodal points  to $u_q, w_q$, respectively (see Figure 1(a)).  From previous definitions we have
$l(\bar{w}_q, \bar{u}_q)=l(\bar{w}_p, \bar{u}_p)$ and
$d(\bar{w}_q, \bar{w}_p)=d(\bar{u}_q, \bar{u}_p)$ (see ~\cite{HoGarChen91} for a more detailed explanation).

\begin{center}
\begin{tabular}{ccc}
\begin{tikzpicture}[scale=0.55]
\draw [gray] (0,0) -- (7,0);
\draw [gray] (0,0) -- (1,4);
\draw [gray] (1,4) -- (6,4);
\draw [gray] (6,4) -- (7,0);
\draw [white] (4,5.5) circle (0pt);
\fill [black] (0,0) circle (3pt) node[black, below] {$u_q$} ;
\draw [black] (1, 4) node[above] {$u_p$};
\fill [black] (1,4) circle (3pt) ;
\fill [black] (6,4) circle (3pt)  node[black, above] {$w_p$} ;
\fill [black] (7,0) circle (3pt)  node[black, below] {$w_q$};
\draw [gray] (0,0)--(1, 2/3);
\draw [gray, dashed] (1, 2/3)--(5, 10/3);
\draw [gray] (5, 10/3)--(6,4); 
\draw [gray] (1,4)--(2, 10/3);
\draw [gray, dashed] (2, 10/3)--(5, 4/3);
\draw [gray] (5, 4/3)--(7,0);
\fill [red] (4.5,4) circle (3pt)node[black,above] {$\overline{u}_q$};
\fill [red] (2.5,4) circle (3pt)node[black,above] {$\overline{w}_q$};
\draw [red] (2.5, 4)--(4.5,4); 
\fill [red] (5.2,0) circle (3pt) node[black, below] {$\overline{u}_p$};
\fill [red] (1.8,0) circle (3pt) node[black, below] {$\overline{w}_p$};
\draw [red] (1.8, 0)--(5.2,0); 
\end{tikzpicture} 
&
&
\begin{tikzpicture}[scale=0.5]
\draw [gray] (0,0) -- (7,0);
\fill [black] (0,0) circle (3pt)  node[black, above] {$u$} ;
\fill [black] (7, 0) circle (3pt)  node[black, above] {$w$} ;
\fill [red] (2,0) circle (3pt) node[black, above] {$\overline{w}$};
\fill [red] (5,0) circle (3pt) node[black,above] {$\overline{u}$};
\draw [red]  (2, 0)--(5, 0);
\draw [dashed, domain=0:7] plot(\x, {4/49*\x^2-4/7*\x});
\draw [black] (3.5, -1) node[below] {$d(u, w)<l(u, w)$};
\end{tikzpicture}  \\[1ex]
(a) & & (b) 
\end{tabular}
\end{center}

{\centerline{Figure 1. Antipodal points on: (a) different edges, (b) the same edge}}

\begin{definition}
Let $L_p$, $L_q$ be two linear arc segments of different edges $[u_p, w_p]$ and $[u_q, w_q]$, respectively, and let $[\bar{w}_q, \bar{u}_q] \subseteq   [u_p, w_p]$ and $[\bar{w}_p, \bar{u}_p] \subseteq   [u_q, w_q]$ be the    
      subedges for which
$\bar{u}_p, \bar{w}_p $ are the antipodal points to $u_p, w_p$ respectively,  and
$\bar{u}_q, \bar{w}_q$ are  the antipodal points  to $u_q, w_q$, respectively.
 If $L_p \subseteq [\bar{w}_q, \bar{u}_q]$ and  $L_q \subseteq [\bar{w}_p, \bar{u}_p]$, then $L_p$, $L_q$ are  called antipodal  segments to each other.
\end{definition}
(This definition includes the case $[\bar{w}_q, \bar{u}_q] = [u_p, w_p]$ and $[\bar{w}_p, \bar{u}_p] =   [u_q, w_q]$).
The following result  summarizes   
the behavior of distance $d(P, Q)$ on linear arc segments of different edges (see Hooker, Garfinkel and Chen~\cite{HoGarChen91}).

\begin{lemma} \label{lema6}
Let $P$ be restricted to linear arc segment $L_p$ of edge $[u_p, w_p]$ and $Q$ to linear arc segment $L_q$  of edge $[u_q, w_q]$, with  $[u_p, w_p] \neq [u_q, w_q] $. 
\begin{enumerate}
\item \  If $L_p, L_q$ are antipodal  segments to each other,  $d(P, Q)$ is concave. 
\item \  Otherwise, $d(P, Q)$ is linear.
\end{enumerate}
\end{lemma}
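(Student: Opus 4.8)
The plan is to express $d(P,Q)$ on $L_p\times L_q$ as the minimum of two functions that are linear in the natural arc-length parameters; concavity (part~1) is then immediate, and for part~2 it remains to show that, when the two segments are not antipodal, this minimum is in fact attained by a single one of the two linear functions.

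First I would record the basic decomposition. Since $P$ and $Q$ lie on distinct edges and shortest paths do not repeat points, a shortest path from $P$ to $Q$ leaves the open edge $[u_p,w_p]$ through exactly one of its endpoints, and the part of the path on $[u_p,w_p]$ is the straight subedge from $P$ to that endpoint; hence, with $x=l(u_p,P)$,
$$d(P,Q)=\min\bigl\{\,x+d(u_p,Q),\ (l(u_p,w_p)-x)+d(w_p,Q)\,\bigr\}$$
for all $P\in[u_p,w_p]$, $Q\in[u_q,w_q]$ (the formula continues to hold when $P$ or $Q$ is a node or the two edges share an endpoint). Restricting to $P\in L_p$, $Q\in L_q$ and writing $t=l(u_q,Q)$: since $L_q$ is a linear arc segment and $u_p,w_p\in V$, both $d(u_p,Q)$ and $d(w_p,Q)$ are linear on $L_q$, so the right-hand side is the minimum of two functions linear in $(x,t)$. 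Thus $d(P,Q)$ is concave on $L_p\times L_q$ in every case, which already establishes part~1.

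For part~2 I would locate the ``switch'' between the two linear pieces. For fixed $Q$ the two terms above are equal at $x=\bar x(Q):=l(u_p,\bar Q)=\frac{1}{2}\bigl(l(u_p,w_p)+d(w_p,Q)-d(u_p,Q)\bigr)$, the point of $[u_p,w_p]$ antipodal to $Q$. Two facts then drive the argument. (i) As $Q$ runs over $[u_q,w_q]$, the antipode $\bar Q$ runs monotonically over $[\bar w_q,\bar u_q]$; this is part of the structure theory of~\cite{HoGarChen91} already quoted above, and can also be checked directly by following the slopes of $d(w_p,Q)-d(u_p,Q)$ across the peaks $\bar u_p$, $\bar w_p$. (ii) Since $\bar u_q,\bar w_q$ are antipodal to the vertices $u_q,w_q$, each is an arc bottleneck point of $[u_p,w_p]$ (or, in a degenerate position, a node of it), so neither lies in the interior of any linear arc segment of $[u_p,w_p]$; hence $L_p$ is either contained in $[\bar w_q,\bar u_q]$ or, being connected, lies entirely on one side of the whole segment $[\bar w_q,\bar u_q]$.

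Now suppose $L_p,L_q$ are not antipodal, so $L_p\not\subseteq[\bar w_q,\bar u_q]$ or $L_q\not\subseteq[\bar w_p,\bar u_p]$; using the symmetric decomposition $d(P,Q)=\min\{t+d(u_q,P),(l(u_q,w_q)-t)+d(w_q,P)\}$ we may assume the former holds. By (ii), $L_p$ lies on one side of $[\bar w_q,\bar u_q]$; after relabelling the endpoints of the two edges if necessary, we may assume $L_p$ lies between $u_p$ and $\bar w_q$, where $\bar w_q$ is the endpoint of $[\bar w_q,\bar u_q]$ nearer $u_p$. Then every $P\in L_p$ has $x\le l(u_p,\bar w_q)$, while by (i) every $Q\in L_q$ has $\bar Q\in[\bar w_q,\bar u_q]$, so $\bar x(Q)=l(u_p,\bar Q)\ge l(u_p,\bar w_q)\ge x$. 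Hence the minimum is always attained by its first term, $d(P,Q)=x+d(u_p,Q)$ throughout $L_p\times L_q$, which is linear because $d(u_p,Q)$ is linear on $L_q$; this proves part~2. The step that needs the most care is fact~(i), the monotonicity of the edge-to-edge antipodal correspondence that pins its image to exactly $[\bar w_q,\bar u_q]$, together with making the ``one side of $[\bar w_q,\bar u_q]$'' reduction airtight across the degenerate configurations; everything else is routine manipulation of the min-of-two-linear-functions formula.
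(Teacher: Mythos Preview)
Your proof is correct, but it is worth noting that the paper does not actually prove this lemma: it is quoted from Hooker, Garfinkel and Chen~\cite{HoGarChen91}, and the only justification the paper offers is the informal remark that in the antipodal case the distance ``behaves like the distance on the parallelogram in Figure~1(a)'' while otherwise it ``behaves like distance on a line segment'', followed by the explicit case-by-case formulas for $d(P,Q)$. So there is nothing to compare against at the level of a proof.

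What you have written is a genuine argument, and a clean one. The decomposition $d(P,Q)=\min\{x+d(u_p,Q),\,l(u_p,w_p)-x+d(w_p,Q)\}$ together with linearity of $d(u_p,\cdot)$ and $d(w_p,\cdot)$ on $L_q$ gives concavity immediately, and your treatment of part~2 via the switch point $\bar x(Q)=l(u_p,\bar Q)$ and fact~(ii) (that $\bar u_q,\bar w_q$ are arc bottleneck points and hence cannot lie in the interior of any linear arc segment of $[u_p,w_p]$) is exactly right. The one place where you, like the paper, still lean on \cite{HoGarChen91} is your fact~(i), that the edge-to-edge antipodal correspondence has image exactly $[\bar w_q,\bar u_q]$; you flag this honestly. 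If you wanted to make the proof fully self-contained, this is the point to expand: it follows by tracking the piecewise-linear function $t\mapsto d(w_p,Q)-d(u_p,Q)$ across the bottleneck points $\bar u_p,\bar w_p$ of $[u_q,w_q]$ and checking that its extreme values occur at $t=0$ and $t=l(u_q,w_q)$.
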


In case 1, the distance between a point $P$ on the segment $[\bar{w}_q, \bar{u}_q]$ and a point $Q$ on the segment $[\bar{w}_p, \bar{u}_p]$ behaves like the distance on the parallelogram in Figure 1(a), and it is concave. Distance on any other pair of segments behaves like distance on a line segment and is therefore linear.

By denoting $x=l(u_p, P)$, with $x \in  [0, l(u_p, w_p)]$ and
$y=l(u_q, Q)$, with $y \in  [0, l(u_q, w_q)]$, we can compute the  distance $d(P, Q)$ for the cases considered in this lemma. 
$$d(P, Q) = \left\{ \begin{array}{l}
\min\{ x + d(u_p, u_q) + y, \; l(u_p, w_p)-x + d(w_p, w_q) + l(u_q, w_q)-y \}, \; \;   L_p, L_q \;  \mbox{antipodal} \\[2ex]
x + d(u_p, u_q) + y,  \quad L_p \subseteq [u_p, \bar{w}_q],   L_q \subseteq [u_q, \bar{w}_p] \;
\mbox{or}  \;  L_q \subseteq [\bar{w}_p, \bar{u}_p] \\[2ex]
x + d(u_p, w_q) + l(u_q, w_q)-y, \quad   L_p \subseteq [u_p, \bar{w}_q],  L_q \subseteq [\bar{u}_p, w_q] 
\end{array} \right. $$
Note that in the case $L_p, L_q$ antipodal segments, with 
$u_p \neq \bar{w}_q$, $\bar{u}_q \neq w_p$ 
and similarly 
$u_q \neq \bar{w}_p$, $\bar{u}_p \neq w_q$, the distance $d(P, Q)$ can also be computed by $\min \{ x + d(u_p, w_q)+l(u_q, w_q)-y, \; l(u_p, w_p)-x+d(w_p, u_q)+y \}$. 
Finally, the remaining cases obtained by combining $L_p, L_q$  can be reduced  to one of these by symmetry  (for example, for the case $L_p \subseteq [u_p, \bar{w}_q],  L_q \subseteq [\bar{u}_p, w_q]$ the distance can also be equivalently obtained as $l(u_p, w_p) -x+ d(w_p, u_q)+y$). 

We now study the case in which $P, Q$ lie on the same edge $ [u, w]$, with $d(u, w) \le l(u, w)$. 
Let  $\bar{u}$, $\bar{w}$ be the antipodal points to $u, w$ respectively. 
The sequence $\{ u, \bar{w}, \bar{u}, w\}$ induces a partition of $[u, w]$ in (at most) three consecutive subedges: $[u, \bar{w}]$, $[\bar{w}, \bar{u}]$ and $[\bar{u}, w]$ (Figure 1(b)).       
 \begin{lemma} \label{lema7}
Let $L_p$, $L_q$ be two linear arc segments  of $[u, w]$ such that $P $ is restricted to $L_p$ and $Q$ is restricted to $L_q$. 
\begin{enumerate}
\item \ If $L_p=L_q$ the distance $d(P, Q)$ is convex. 
 \item \ If $L_p \neq L_q$, and $L_p$, $L_q$ lie respectively in non-adjacent subedges of the partition, $d(P, Q)$  is  concave.
\item \ Otherwise, $d(P, Q)$ is linear. 
\end{enumerate}
\end{lemma}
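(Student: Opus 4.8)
\noindent\emph{Proof strategy.} The plan is to parametrise the two segments by arc length along the common edge and to reduce $d(P,Q)$ to a minimum of finitely many affine maps. Put $x=l(u,P)$, $y=l(u,Q)$, write $l_e=l(u,w)$ and $g=d(u,w)\le l_e$. By the arc bottleneck formula, $\bar w$ is at distance $(l_e-g)/2$ from $u$ and $\bar u$ at distance $(l_e+g)/2$ from $u$, so the partition consists of the subedges $S_1=[u,\bar w]$, $S_2=[\bar w,\bar u]$, $S_3=[\bar u,w]$, of lengths $(l_e-g)/2$, $g$, $(l_e-g)/2$. Since $\bar w,\bar u\in B_e$, every linear arc segment of $[u,w]$ lies inside a single $S_k$; hence $L_p$ and $L_q$ each lie in one subedge, so the subcases below are exhaustive. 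The first step is the standard representation, valid for any $P,Q\in[u,w]$,
$$d(P,Q)=\min\{\,|x-y|,\ x+g+(l_e-y),\ (l_e-x)+g+y\,\},$$
obtained by noting that a shortest $P$–$Q$ path either stays on the edge (length $|x-y|$) or leaves it, and then must exit and re-enter through the endpoints, going $P\to u$, a detour of length $d(u,w)=g$ through the rest of the network, then $w\to Q$, or symmetrically through $w$ and $u$ (the remaining endpoint combinations are dominated).

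Writing $|x-y|=\max\{x-y,\,y-x\}$, the right-hand side is a minimum involving a single max, and the case analysis is precisely about when that max degenerates. The key observation is: if $L_p$ and $L_q$ are disjoint, say with $L_p$ weakly to the left of $L_q$, then $x\le y$ throughout $L_p\times L_q$, so $|x-y|=y-x$ there and $d(P,Q)$ becomes a minimum of three affine functions, hence concave; whereas if $L_p=L_q$, both orders of $x$ and $y$ occur and the term $|x-y|$ survives in full. This already gives concavity whenever the segments are disjoint (Cases 2 and 3 of the statement) and reduces Case 1 to bounding the two ``around'' terms from below.

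It then remains to identify the active affine term in each configuration. For Case 1, $L_p=L_q\subseteq S_k$, so $x,y$ lie in one subedge; the only estimate needed is $2\cdot(\text{length of }S_k)\le g+l_e$, which holds since that length is at most $\max\{(l_e-g)/2,\,g\}\le(l_e+g)/2$, and it forces both around terms to be $\ge|x-y|$, so $d(P,Q)=|x-y|$ is convex. If instead $L_p\ne L_q$ lie in the same subedge, the same estimate gives $d(P,Q)=|x-y|$, but the segments are now ordered, so $d(P,Q)=y-x$ is linear. If $L_p,L_q$ lie in adjacent subedges, then up to the symmetry $x\leftrightarrow l_e-y$ one has $x\le(l_e-g)/2\le y\le(l_e+g)/2$, whence $y-x\le(l_e+g)/2$ is enough to make both around terms dominate $y-x$, and again $d(P,Q)=y-x$ is linear; this completes Case 3. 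Finally, the only non-adjacent pair is $L_p\subseteq S_1$, $L_q\subseteq S_3$, where $x\le(l_e-g)/2\le(l_e+g)/2\le y$, so $d(P,Q)=\min\{\,y-x,\ x+g+l_e-y,\ l_e-x+g+y\,\}$ is concave (and generically strictly so, since $y-x$ and $x+g+l_e-y$ agree along the line $y-x=(l_e+g)/2$, which meets $S_1\times S_3$), settling Case 2.

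The step I expect to be the main obstacle is the first one: making the representation of $d(P,Q)$ fully rigorous — arguing that a shortest path may be taken to detour only through the two endpoints and that the detour costs exactly $g=d(u,w)$ — together with the degenerate configurations, namely $g=l_e$ (which collapses $S_1$ and $S_3$ to points, so only the ``same-subedge'' cases occur) and $g=0$, i.e.\ a loop edge (which collapses $S_2$). Once the representation is in place, the rest is a bounded amount of elementary inequality checking, organised by where $L_p$ and $L_q$ sit among $S_1,S_2,S_3$.
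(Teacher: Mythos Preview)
Your argument is correct and follows the same line the paper takes: parametrise by arc length along $[u,w]$, write $d(P,Q)$ explicitly, and read off convexity/concavity/linearity in each configuration. The paper itself does not give a formal proof of this lemma; it cites Hooker--Garfinkel--Chen, states the explicit formula
\[
d(P,Q)=\begin{cases}|x-y|,& L_p=L_q,\\ \min\{y-x,\ x+d(u,w)+l(u,w)-y\},& L_p\subseteq[u,\bar w],\ L_q\subseteq[\bar u,w],\ \bar w\neq\bar u,\\ |y-x|,&\text{otherwise},\end{cases}
\]
and justifies Case~1 only by the remark that ``the arc segment between $P$ and $Q$ is the shortest path between them''. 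Your three-term representation $d(P,Q)=\min\{|x-y|,\,x+g+l_e-y,\,l_e-x+g+y\}$ and the inequality $2\cdot\mathrm{length}(S_k)\le g+l_e$ are exactly what is needed to substantiate that remark and to pin down which affine term is active in each subcase, so you are filling in details the paper leaves implicit rather than taking a different route. The one cosmetic difference is that your concave-case formula carries the third term $l_e-x+g+y$, which the paper drops; since $l_e-x+g+y\ge y-x$ always, it is redundant there, and your expression reduces to the paper's two-term minimum.
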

Clearly, if $L_p=L_q$ the distance $d(P, Q)$ is convex because the arc segment between $P$ and $Q$ is the shortest path between them, and it is concave if 
$L_p \subseteq [u, \bar{w}]$, $L_q \subseteq [ \bar{u}, w]$ (or vice-versa), with $\bar{w} \neq \bar{u}$ (which arises when $d(u, w)<l(u, w)$, that is, when the edge $[u, w]$ is not the shortest path between $u, w$, as in Figure 1(b)).   As above, by denoting $x = l(u, P)$ and $y=l(u, Q)$,  we have: 
 $$d(P, Q)= \left\{ \begin{array}{l}
|x- y|, \quad  L_p = L_q \\ [2ex]
\min \{ y-x, \; x+d(u, w) + l(u, w)-y \},  \; \;   L_p \subseteq [u, \bar{w}], L_q \subseteq [ \bar{u}, w],  \bar{w} \neq \bar{u} \\[2ex]
|y-x|, \; \mbox{otherwise} 
\end{array} \right. $$
Note that in the third case, the distance $|y-x|$ becomes linear, according with the order in which $L_p$ and $L_q$ are placed. 
If $d(u, w)=l(u, w)$, then $u=\bar{w}$ and $\bar{u}=w$, therefore the distance $d(P, Q)$ is convex (if $L_p=L_q$), or linear (if $L_p \neq L_q$).

\section{Solving the problem on a network}
The solution method is based on decomposing  the problem (1) in a collection of subproblems independent to each other, and solving each of them via discretization of the solution set. 
To this end and with purposes  of readability, we will describe the process in three phases: the first one is devoted to both decomposing the problem (1) and  identifying the two type  of subproblems, the second one deals with the subproblems of type 1, and finally in the last step we will study the subproblems of type 2. Next it will be described both type of subproblems.

\subsection{Decomposing the problem}
To decompose the problem we first need the matrix  $\Delta= \D \left( d(v_i, v_j) \right)_{i,j}$ of shortest distances between all pairs of nodes of the network ${\cal N}$. This matrix can be calculated in $O(|V| |E|)$ running time. 
 Then, for each edge $e \in E$ we compute, and sort,  the arc bottleneck points of the set $B_e$ (in $O (|V| \log |V|)$ time).	At the end of this process we have, for each edge $e$ of the network, the ordered sequence ${\cal L}(e)$ of all linear arc segments of the edge. Besides, given a pair of different edges $e_p=[u_p, w_p]$ and $e_q=[u_q, w_q]$ we  know if two linear arc segments
$L_p \in {\cal L}(e_p)$ and  $L_q \in  {\cal L}(e_q)$ are, or not, antipodal to each other. If $e_p$ and $e_q$ are the same edge
  $e=[u, w]$ (which is the case of Lemma~\ref{lema7}), we also know the location of the linear arc segments on the subedges of the partition induced by $\{u, \bar{w}, \bar{u}, w\}$.

Let ${\cal L} =
\D \bigcup_{e \in E} {\cal L}(e) $  be the set of all linear arc segments of the overall network. Thus  problem (1)   is decomposed into a set of subproblems, where each of them is obtained by restricting the feasible space  ${\cal N}\times {\cal N}$ to $L_p \times L_q$, with  $L_p, L_q \in {\cal L}$. By imposing that $X_1 \in L_p$, $X_2 \in L_q$  we obtain the restricted problem: 
$$  \begin{array}{ll} \max & F(X_1, X_2) := \D \sum_{(i, j) \in C(X_1, X_2)} t_{ij} \\[4ex]
\mbox{s.t.} &  (X_1, X_2) \in L_p \times L_q, \;   L_p, L_q \in {\cal L}
\end{array}  \eqno(2)$$
A solution to problem (1) is found by solving the collection of all $|{\cal L}|^2$ restricted problems (2), and then selecting the best solution.

The classification of the restricted problem (2) is done according to the concavity of the distance
$d(X_1, X_2)$.  If $d(X_1, X_2)$ is either linear or convex, we classify the restricted problem  as type 2. Otherwise, as type 1.
 More specifically,
consider the restricted problem formulated in (2). Then the pair $\{ L_p, L_q \}$ is called of type 1 if one  of the following two conditions holds:
\begin{description}
\item{(Lemma 6-1)}: $L_p \in {\cal L}(e_p)$, $L_q \in {\cal L}(e_q)$, with $e_p\neq e_q$, and $L_p$, $L_q$ are antipodal to each other.
\item{(Lemma 7-2)}: $L_p, L_q \in {\cal L}(e)$, with $e=[u, w]$ such that $d(u, w)<l(e)$, and 
$L_p, L_q$ lie in not adjacent subedges of partition $\{u, \bar{w}, \bar{u}, w\}$.
\end{description}
For the remaining cases, the pair $\{L_p, L_q\}$ is said of type 2.

\subsection{The concave case: The restricted problem of type 1}
The problem can be formulated as follows
$$    \begin{array}{ll} \max & F(X_1, X_2) := \D \sum_{(i, j) \in C(X_1, X_2)} t_{ij} \\[4ex]
\mbox{s.t.} &  (X_1, X_2) \in L_p \times L_q   \\[1ex]
\mbox{} &  L_p, L_q \in {\cal L}, \; \{L_p, L_q\} \; \mbox{of type 1}
\end{array}  \eqno(3)$$

Each linear arc segment is a rectifiable subedge in which all distance functions $d(v_i, P)$ are linear, and each vertex $v_i \in V$ links with all points of such subedge via one of their extreme points. On the other hand, we have:
\begin{enumerate}
\item   For each $A_i \in {\cal A}$, the Euclidean distance $||A_i-X||_2$   is convex when $X $  varies in any linear arc segment. 
In effect, from convexity theory  (see~\cite{Boyd, Rockafeller}), 
if $f$ is a convex function in an open set $U$ then the restriction of $f$ to any interval (i.e., straight line segment) inside $U$ is convex. Since every norm on ${\R}^n$ is convex, the result follows straightforwardly.

\item  When $(X_1, X_2) \in L_p \times L_q$, with $\{ L_p, L_q\}$ of type 1, the distance
$d(X_1, X_2)$ is concave (Lemmas~\ref{lema6} and~\ref{lema7}), and also is concave $d_{\cal N}(X_1, X_2) = \alpha \, d(X_1, X_2)$ (where $\alpha \in (0, 1)$ is the speed factor). 
\end{enumerate}

Therefore the function
$h_{ij}(X_1, X_2)=||A_i-X_1||_2+d_{\cal N}(X_1, X_2) + ||X_2-A_j||_2$
is neither convex nor quasiconvex on $L_p \times L_q$ (and similarly for $h_{ij}(X_2, X_1)$).

To illustrate some concepts and properties associated to this case, we will use the  small network with trapezoidal shape shown in Figure 2. 
Length of  basis of trapezoid are 7 and 5, respectively,  and the four vertices 
$u_p, w_p, u_q$ and $w_q$ of network are the points $(0, 2\sqrt{6})$,  $(5,  2\sqrt{6})$, $(-1, 0)$ and $(6, 0)$, respectively. The only arc bottleneck points are
$\bar{w}_p=(0, 0)$ and $\bar{u}_p=(5, 0)$  opposite to $w_p$ and $u_p$, respectively. By selecting $L_p=[u_p, w_p]$ and
$L_q=[\bar{w}_p, \bar{u}_p]$, then $L_p$, $L_q$ are antipodal to each other 
 (note that in this example, $L_p$ is the whole edge).
Subsequent examples on this network deal with the O/D pair $(i, j)$, corresponding to points $A_i(2.5, 6)$ and $A_j(1, -4)$.

\bigskip

\begin{center}
\begin{tikzpicture}[scale=0.55]
\draw  [black] (-4, -4.8)--(10, -4.8)--(10, 7.5)--(-4, 7.5)--cycle;
\draw [->, dashed, gray] (0,0)--(0,7); 
\draw [->, dashed, gray] (0,0)--(8,0); 
\draw [gray] (-1,0) -- (6,0);
\draw [gray] (-1,0) -- (0,4.89897);
\draw [gray] (0,4.89897) -- (5,4.89897);
\draw [gray] (6,0) -- (5,4.89897);
\fill [black] (-1,0) circle (3pt);
\draw (-1.8, 0) node[black, below] {{\scriptsize{$(-1,0)$}}} ;
\draw [black] (-1.5, 0) node[above] {$u_q$};
\fill [black] (6,0) circle (3pt);
\draw (6.5,0)  node[black, below] {\scriptsize{$(6,0)$}};
\draw [black] (6.5, 0) node[above] {$w_q$};
\fill [black]  (0,4.89897) circle (3pt);
\draw (-1.2, 4.89897) node[black, above] {\scriptsize{$(0, 2 \sqrt{6})$}} ;
\draw [black] (0,4.7) node[left] {$u_p$};
\fill [black]  (5,4.89897) circle (3pt);
\draw  (6, 4.89897) node[black, above] {\scriptsize{$(5, 2 \sqrt{6})$}} ;
\draw (-0.5,2.5)  node[black, left] {\scriptsize{$5$}};
\draw (5.5,2.5)  node[black, right] {\scriptsize{$5$}};
\draw [black] (5,4.7) node[right] {$w_p$};
\draw [blue, thick]  (0,4.89897) -- (5,4.89897);
\fill [blue]  (0,4.89897) circle (3pt);
\fill [blue]  (5,4.89897) circle (3pt);
\draw [blue, thick]  (0,0) -- (5,0);
\draw (0.2,0) node[black, below] {\scriptsize{$(0,0)$}};
\draw (4.8,0) node[black, below] {\scriptsize{$(5,0)$}};
\fill [blue]  (0,0) circle (3pt);
\fill [blue]  (5,0) circle (3pt);
\draw [blue] (2.5,4.899) node[below] {$L_p$};
\draw [blue] (2.5,0) node[above] {$L_q$};
\draw (0.6,0) node[black, above] {$\overline{w}_p$};
\draw (4.8,0) node[black, above] {$\overline{u}_p$};
\fill [black]  (2.5, 6) circle (3pt) node[above] {$A_i (2{.}5, 6)$};
\fill [black]  (1, -4) circle (3pt) node[above] {$A_j (1, -4)$};
\end{tikzpicture}
\end{center}

{\centerline{Figure 2. Antipodal linear arc segments $L_p=[\bar{w}_q, \bar{u}_q]=[u_p, w_p]$ and
$L_q=[\bar{w}_p, \bar{u}_p]$}}

\bigskip

The strategy for solving this problem is based on identifying a Finite Dominating Set (FDS) such that an optimal solution can be found by selecting the best solution from the FDS.
To this end we first introduce  some definitions and a collection of sublevel sets which will be used in the construction of such a set.

When $(X_1, X_2) \in L_p \times L_q$,   and $\{ L_p, L_q\}$ are two linear arc segments of type 1,  the distance $d(X_1, X_2)$ is a concave function obtained from the lower envelope of two linear functions: $d_a(X_1, X_2)$ and $d_b(X_1, X_2)$. That is,  $d(X_1, X_2) = \min \{ d_a(X_1, X_2), d_b(X_1, X_2) \}$, where this expression is obtained from
Lemmas~\ref{lema6} and~\ref{lema7} (according to whether or not  $L_p$ and $L_q$ lie in different edges), by replacing   $P $ and $Q $  by $X_1$ and $X_2$, respectively. More specifically,
\begin{enumerate}
\item From Lemma~\ref{lema6}, 
$$ d_a(X_1, X_2)=x + d(u_p, u_q) + y, \; \mbox{and} \;   d_b(X_1, X_2)= l(u_p, w_p)-x + d(w_p, w_q) + l(u_q, w_q)-y $$
\item  From Lemma~\ref{lema7}, 
$$d_a(X_1, X_2)=y-x, \; \mbox{and} \;  d_b(X_1, X_2)= x+d(u, v)+l(u, w)-y$$
\end{enumerate} 

For simplicity of notation, henceforth we will use $(x, y)$ and $(X_1, X_2)$ indistinctly.
In the example of Figure 2,  denoting by $x$ and $y$ the lengths of subedges 
$[u_p, X_1]$ and $[\bar{w}_p, X_2]$, respectively, the distance $d(X_1, X_2)$ is given by
$$ d(X_1, X_2) = \min \{ 6+x+y, \, 16-x-y \}, \; \mbox{with} \; (x, y) \in [0, 5] \times [0, 5] $$

 Therefore, the length $h_{ij}(X_1, X_2)$ of travel path $(A_i, X_1, X_2, A_j)$ can be expressed as:
$$ h_{ij} (X_1, X_2)=||A_i-X_1||_2 + \alpha \, \min \{ d_a(X_1, X_2), d_b(X_1, X_2) \} + ||X_2-A_j||_2 $$
and consequently 
$$ h_{ij} (X_1, X_2)= \min\{ g_{ij}^a (X_1, X_2), g_{ij}^b (X_1, X_2) \}$$
where 
$$ \begin{array}{l} g_{ij}^a (X_1, X_2)=||A_i-X_1||_2+\alpha \, d_a(X_1, X_2)+ ||X_2-A_j||_2 \\[1ex] 
g_{ij}^b (X_1, X_2)= ||A_i-X_1||_2+\alpha \, d_b(X_1, X_2)+ ||X_2-A_j||_2
\end{array} $$
  Since $d(X_2, X_1)=d(X_1, X_2)$, 
in a similar manner we can define the length  $h_{ij}(X_2, X_1)$ of travel path $(A_i, X_2, X_1, A_j)$:  
$$h_{ij}(X_2, X_1) = \min \{ g_{ij}^a(X_2, X_1), g_{ij}^b(X_2, X_1) \}$$
with
$g_{ij}^{t}(X_2, X_1) = ||A_i-X_2||_2+\alpha \, d_t(X_1, X_2)+ ||X_1-A_j||_2$,  
where the superindex $t \in \{a, b\}$ refers to the case determined by
the formula for $d(X_1, X_2)$.

For example,  when
$X_1 \in L_p$, $X_2 \in L_q$,
 Figure 3 (left) shows the function $h_{ij}(X_1, X_2)$ 
of network  of Figure 2 for $\alpha=0{.}3$. Note the
 {\it pagoda roof}-shape of the  surface.

For each O/D pair $(i, j)$, the functions $h_{ij}(X_1, X_2)$ and $h_{ij}(X_2, X_1)$ indicate the length of travel paths $(A_i, X_1, X_2, A_j)$ and $(A_i, X_2, X_1, A_j)$, respectively. Therefore, 
the following notation uses the superindex ``12" for the  first path and ``21" for the second path (this notation was also used in~\cite{KorMesa11}).

\begin{definition}[Sublevel Sets]  
Let $\{ L_p, L_q\}$ be two linear arc segments of type 1 such that
$(X_1, X_2) \in L_p \times L_q$.  
For $\eta \ge 0$, let us consider:
\begin{enumerate}
\item  The  ($\eta$)-sublevel set $S_{ij}^{12}(\eta)$ of  function $h_{ij}(X_1, X_2)$,
given by:
$$S_{ij}^{12}(\eta)= \{ (X_1, X_2) \in L_p \times L_q : h_{ij}(X_1, X_2) \le \eta \}$$
Analogously, $S_{ij}^{21}(\eta)= \{ (X_1, X_2) \in L_p \times L_q : h_{ij}(X_2, X_1) \le \eta \}$.
\item  For $t \in \{a, b\}$, the  ($\eta$)-sublevel sets
$ G_{ij}^{12(t)}(\eta)$ and $G_{ij}^{21(t)}(\eta) $, of functions
$g_{ij}^t(X_1, X_2)$ and
$g_{ij}^t(X_2, X_1)$, respectively,  given by:
$$ \begin{array}{l}
G_{ij}^{12(t)} (\eta)= \{ (X_1, X_2) \in L_p \times L_q :  \, g_{ij}^t(X_1, X_2) \le \eta \} \\[1ex]
G_{ij}^{21(t)} (\eta)= \{ (X_1, X_2) \in L_p \times L_q :  \, g_{ij}^t(X_2, X_1) \le \eta \}
\end{array} $$
\end{enumerate}
\end{definition}

For several $\eta$-values, Figure 3 (right)   displays the corresponding  level sets $S_{ij}^{12} (\eta)$  of the surface on the left of the figure.

\begin{center}
\begin{tabular}{cc}
\scalebox{0.35}[0.35]{\includegraphics{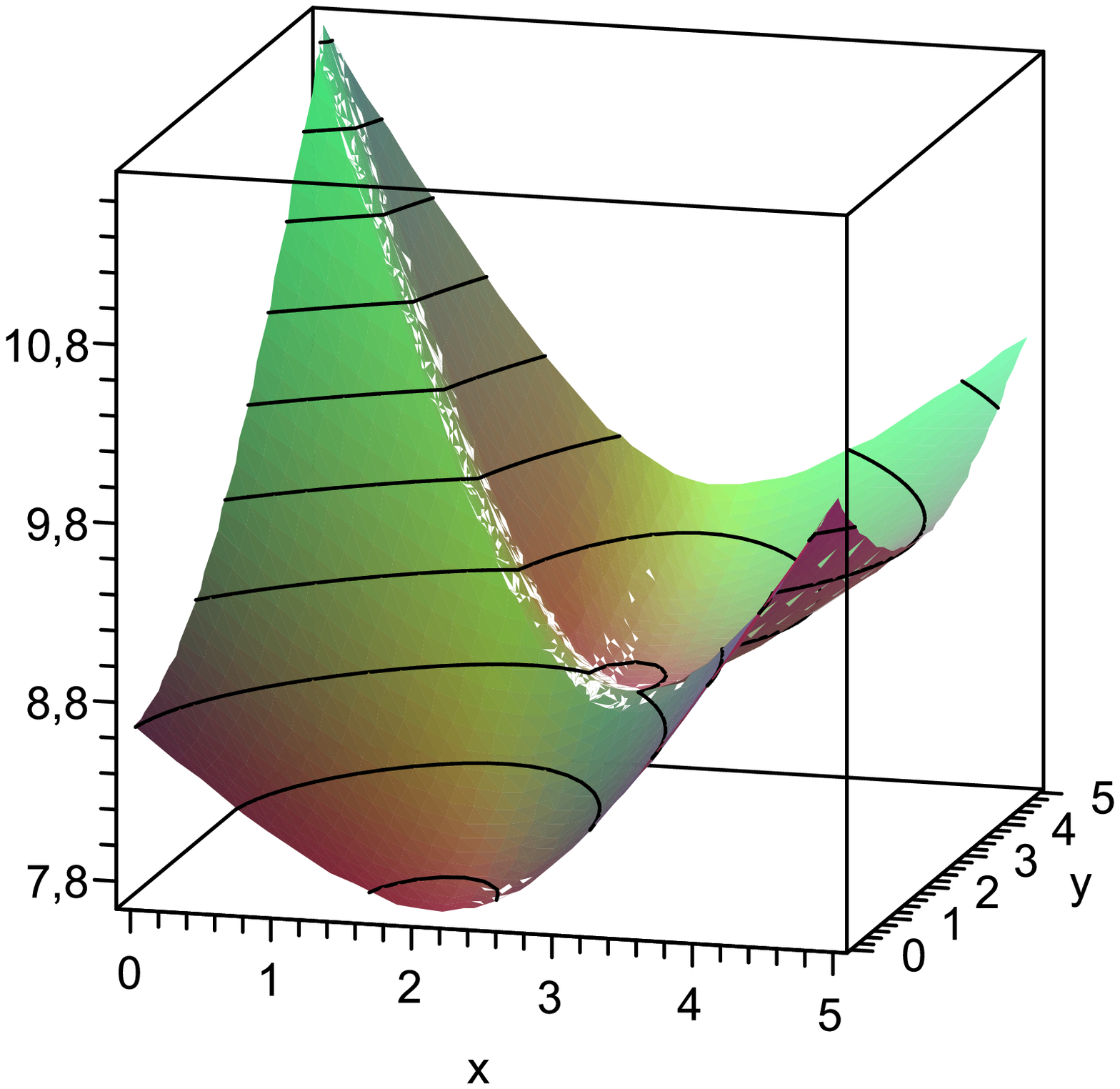}} 
&
\scalebox{0.28}[0.28]{\includegraphics{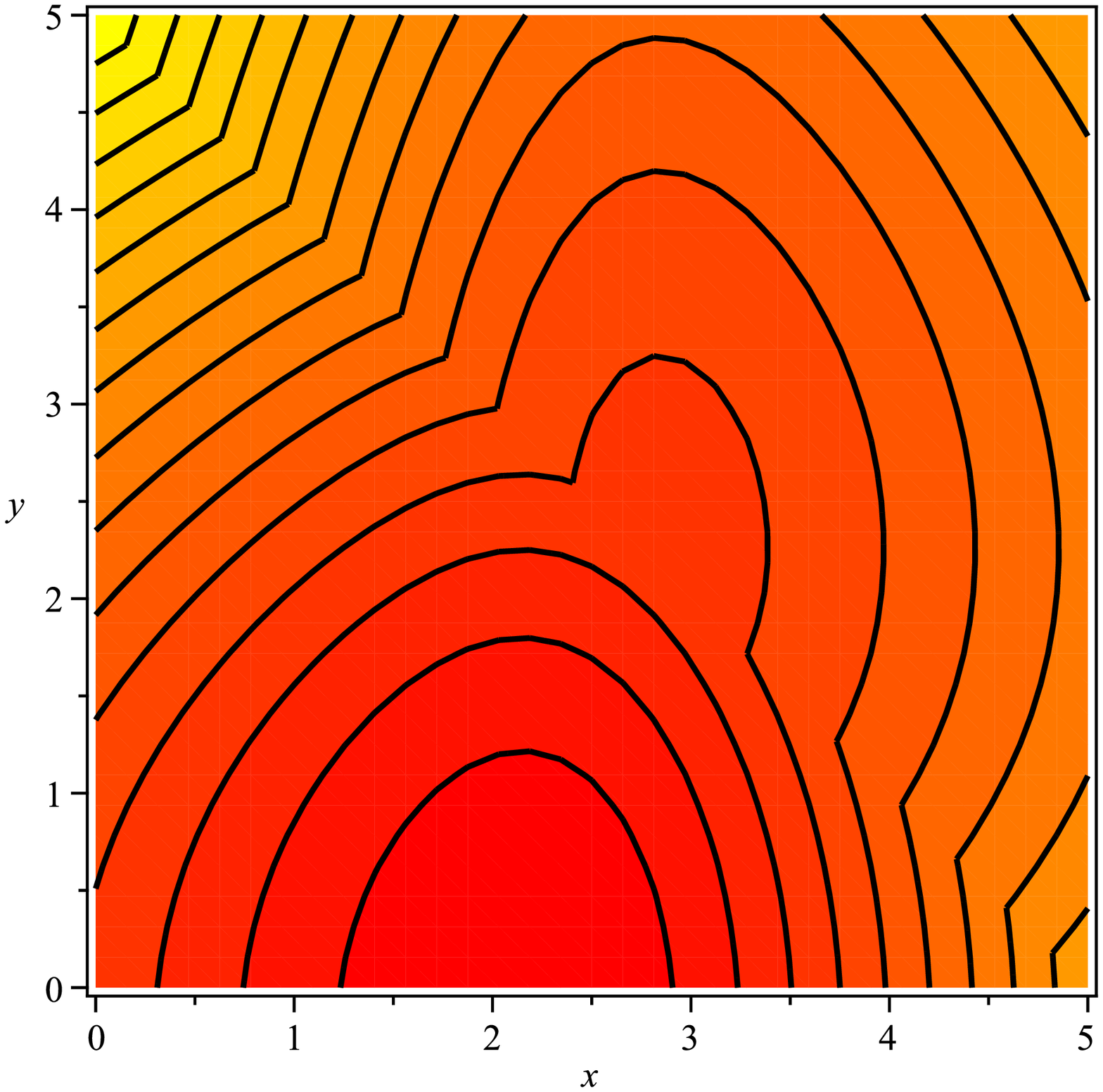}} 
\end{tabular} 
\end{center}
\vskip -1ex
{\centerline{Figure 3.  Surface $h_{ij}(X_1, X_2)$ for  $\alpha=0{.}3$ (left), and 
sets  $S_{ij}^{12}(\eta)$ of such a surface (right)}}
 
\vskip 4ex

For $t \in \{a, b\}$, the functions $g_{ij}^t(X_1, X_2)$ are the sum of the convex  term $||A_i-X_1||_2+||A_j-X_2||_2$
 and the  linear function $\alpha \, d_t (X_1, X_2)$. Similarly, 
 $g_{ij}^t(X_2, X_1)$ is also the sum of a convex and a linear term.  Therefore,  the convexity of the sets  $G_{ij}^{12(t)}(\eta)$ and $G_{ij}^{21(t)}(\eta)$, for $t \in \{a, b\}$,
is a straightforward consequence (see~\cite{Boyd}).

\vskip 2ex
From these definitions, we have 
\begin{lemma} \label{lema4a}
$S_{ij}^{\tau}(\eta) = G_{ij}^{\tau (a)}(\eta) \cup G_{ij}^{\tau (b)}(\eta)$,  for $\tau \in \{12, 21\}$.
\end{lemma}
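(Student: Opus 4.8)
The plan is to unwind the definitions and reduce the identity to the elementary fact that the $\eta$-sublevel set of a pointwise minimum of two functions is the union of the individual $\eta$-sublevel sets. Concretely, recall that for $\tau=12$ we have $h_{ij}(X_1,X_2)=\min\{g_{ij}^a(X_1,X_2),g_{ij}^b(X_1,X_2)\}$, and for $\tau=21$ we have $h_{ij}(X_2,X_1)=\min\{g_{ij}^a(X_2,X_1),g_{ij}^b(X_2,X_1)\}$; both identities were established just above the statement, as a direct consequence of $d(X_1,X_2)=\min\{d_a(X_1,X_2),d_b(X_1,X_2)\}$ and the linearity of the Euclidean terms in the decomposition of $h_{ij}$. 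Since the whole argument is identical in the two cases $\tau\in\{12,21\}$, I would prove it once with $\tau$ fixed and generic, writing $h^\tau$ for the relevant function ($h_{ij}(X_1,X_2)$ or $h_{ij}(X_2,X_1)$) and $g^{\tau(t)}$ for the corresponding $g_{ij}^t$.

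The core step is a two-way inclusion on the domain $L_p\times L_q$. For the inclusion $S_{ij}^{\tau}(\eta)\subseteq G_{ij}^{\tau(a)}(\eta)\cup G_{ij}^{\tau(b)}(\eta)$: take $(X_1,X_2)\in L_p\times L_q$ with $h^\tau(X_1,X_2)\le\eta$; since $h^\tau(X_1,X_2)=\min\{g^{\tau(a)}(X_1,X_2),g^{\tau(b)}(X_1,X_2)\}$, at least one of the two values $g^{\tau(a)}$, $g^{\tau(b)}$ at $(X_1,X_2)$ equals this minimum and hence is $\le\eta$, so $(X_1,X_2)$ lies in $G_{ij}^{\tau(a)}(\eta)$ or in $G_{ij}^{\tau(b)}(\eta)$. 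Conversely, if $(X_1,X_2)\in G_{ij}^{\tau(t)}(\eta)$ for some $t\in\{a,b\}$, then $g^{\tau(t)}(X_1,X_2)\le\eta$, and since the minimum of the two $g$-values is no larger than $g^{\tau(t)}(X_1,X_2)$, we get $h^\tau(X_1,X_2)\le\eta$, i.e. $(X_1,X_2)\in S_{ij}^{\tau}(\eta)$; this handles the reverse inclusion $G_{ij}^{\tau(a)}(\eta)\cup G_{ij}^{\tau(b)}(\eta)\subseteq S_{ij}^{\tau}(\eta)$. Combining the two inclusions gives the claimed equality, and repeating verbatim for the other value of $\tau$ completes the proof.

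There is essentially no obstacle here: the statement is a bookkeeping lemma whose only content is the identity $\{z:\min\{p(z),q(z)\}\le\eta\}=\{z:p(z)\le\eta\}\cup\{z:q(z)\le\eta\}$ applied on the restricted domain. The one thing I would be careful about is to make explicit that all sublevel sets are taken inside the same ambient set $L_p\times L_q$ (as in the definition), so that the union and the inclusions are genuinely between subsets of $L_p\times L_q$; and to point back to the already-derived representation $h_{ij}(X_1,X_2)=\min\{g_{ij}^a(X_1,X_2),g_{ij}^b(X_1,X_2)\}$ (and its ``$21$'' analogue) rather than re-deriving it. No convexity or network structure is needed for this particular lemma — that machinery (Lemmas~\ref{lema6} and~\ref{lema7}, and the convexity of the $G$-sets) is only relevant for what comes afterward.
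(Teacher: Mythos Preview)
Your proof is correct and follows essentially the same double-inclusion argument as the paper, reducing everything to the identity $\{\min(p,q)\le\eta\}=\{p\le\eta\}\cup\{q\le\eta\}$. If anything, your reverse inclusion is slightly cleaner: you use $h^\tau=\min\{g^{\tau(a)},g^{\tau(b)}\}\le g^{\tau(t)}\le\eta$ directly, whereas the paper splits into the two subcases $(X_1,X_2)\notin G_{ij}^{12(b)}(\eta)$ and $(X_1,X_2)\in G_{ij}^{12(b)}(\eta)$, which is unnecessary.
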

\begin{proof}
We  prove the lemma by double inclusion.
Without loss of generality, we  assume $\tau=12$.
Let $(X_1, X_2) \in S_{ij}^{12}(\eta)=\{ (X_1, X_2) \in L_p \times L_q: \; h_{ij}(X_1, X_2) \le \eta\}$ be such that $h_{ij}(X_1, X_2) =\min \{g_{ij}^a(X_1, X_2), g_{ij}^b (X_1, X_2)\} = g_{ij}^t (X_1, X_2)$, for some $t \in \{a, b\}$. Since $  g_{ij}^t (X_1, X_2) \le \eta$,
then $(X_1, X_2) \in G_{ij}^{12 (t)}$,  which implies  $S_{ij}^{12}(\eta) \subseteq G_{ij}^{12(a)}(\eta) \cup G_{ij}^{12(b)}(\eta)$.

Conversely, if $(X_1, X_2) \in G_{ij}^{12 (a)}(\eta) \cup G_{ij}^{12(b)}(\eta)$, then 
$(X_1, X_2)$ belongs  to (at least) one of such sets, suppose $(X_1, X_2)  \in G_{ij}^{12 (a)}(\eta) $. If $(X_1, X_2) \notin  G_{ij}^{12(b)}(\eta)$, then $g_{ij}^b (X_1, X_2) > \eta$, therefore $h_{ij}(X_1, X_2) =  g_{ij}^a (X_1, X_2) \le \eta$ which implies
$(X_1, X_2) \in S_{ij}^{12}(\eta)$. If $(X_1, X_2) \in  G_{ij}^{12(b)}(\eta)$, then
$h_{ij} (X_1, X_2) \le \max \{ g_{ij}^a (X_1, X_2), g_{ij}^b (X_1, X_2) \} \le \eta$, consequently $(X_1, X_2) \in S_{ij}^{12}(\eta)$. In this case we have obtained
the converse inclusion
$G_{ij}^{12(a)}(\eta) \cup G_{ij}^{12(b)}(\eta) \subseteq S_{ij}^{12}(\eta)$,
which concludes the proof.
\end{proof}

From this result, both $S_{ij}^{12}(\eta) = G_{ij}^{12(a)}(\eta) \cup G_{ij}^{12(b)}(\eta) $ and $S_{ij}^{21}(\eta) = G_{ij}^{21(a)}(\eta) \cup G_{ij}^{21(b)}(\eta)$ are the union of two convex (but possibility not disjoint) sets.  The following example shows that 
$G_{ij}^{12(a)}(\eta) \cap G_{ij}^{12(b)}(\eta) \neq \emptyset$, for some $\eta$-values, with $\eta < ||A_i-A_j||_2$. 

In  Figure 2, we have $||A_i-A_j||_2 = \D \frac{\sqrt{409}}{2}\simeq 10.11$. Figure 4 (a) displays, for $\alpha=0.4$ and $\eta = 10$, the boundaries of the sublevel sets $G_{ij}^{12(a)}(\eta)$ and $G_{ij}^{12(b)}(\eta)$. It can be observed that the intersection of both sets is not empty.

\bigskip

\begin{proposition}\label{propo5a}
For any $0 \le \eta < ||A_i-A_j||_2$, 
$S_{ij}^{12}(\eta) \cap S_{ij}^{21}(\eta) = \emptyset$.
\end{proposition}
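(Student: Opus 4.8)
The plan is to argue by contradiction and reduce the statement to the triangle inequality in the Euclidean plane together with the nonnegativity of the high-speed distance. Suppose there were a point $(X_1, X_2) \in L_p \times L_q$ lying in both $S_{ij}^{12}(\eta)$ and $S_{ij}^{21}(\eta)$. Unwinding the definitions of the two sublevel sets, this means
$$ ||A_i-X_1||_2 + d_{\cal N}(X_1, X_2) + ||X_2-A_j||_2 \le \eta \quad \mbox{and} \quad ||A_i-X_2||_2 + d_{\cal N}(X_2, X_1) + ||X_1-A_j||_2 \le \eta . $$

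First I would add these two inequalities. Since $d_{\cal N}$ is a metric, it is symmetric, so the two network terms combine into $2\,d_{\cal N}(X_1,X_2)$ and the sum becomes
$$ \big( ||A_i-X_1||_2 + ||X_1-A_j||_2 \big) + \big( ||A_i-X_2||_2 + ||X_2-A_j||_2 \big) + 2\,d_{\cal N}(X_1, X_2) \le 2\eta . $$
Next, I would apply the triangle inequality for the Euclidean metric to each bracketed expression, obtaining $||A_i-X_1||_2 + ||X_1-A_j||_2 \ge ||A_i-A_j||_2$ and the analogous bound with $X_2$. Combining this with $d_{\cal N}(X_1,X_2) \ge 0$ gives $2\,||A_i-A_j||_2 \le 2\eta$, that is $||A_i-A_j||_2 \le \eta$, which contradicts the hypothesis $\eta < ||A_i-A_j||_2$. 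Hence no such point exists, and the two sublevel sets are disjoint.

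There is essentially no hard step here; the only point worth emphasising is that the network distance term does not cancel when the two inequalities are summed — it is the same symmetric quantity appearing in both $h_{ij}(X_1,X_2)$ and $h_{ij}(X_2,X_1)$, so it accumulates with a positive coefficient, and one actually gets the slightly stronger inequality $||A_i-A_j||_2 + d_{\cal N}(X_1,X_2) \le \eta$. Note also that this argument uses neither the type-1 structure of $\{L_p,L_q\}$, nor the concavity of $d$, nor Lemma~\ref{lema4a}: it relies only on the form of $h_{ij}$ and on $d_{\cal N}$ being a nonnegative metric, so the conclusion in fact holds verbatim with $L_p \times L_q$ replaced by all of ${\cal N}^2$.
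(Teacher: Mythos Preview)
Your proof is correct and rests on the same two ingredients as the paper's --- the Euclidean triangle inequality and the nonnegativity of the network distance --- but the execution differs slightly. The paper works asymmetrically: it first passes through the decomposition $S_{ij}^{12}(\eta)=G_{ij}^{12(a)}(\eta)\cup G_{ij}^{12(b)}(\eta)$, starts from $(X_1,X_2)\in G_{ij}^{12(a)}(\eta)$, drops the nonnegative term $\alpha\,d_a$ to get $||A_i-X_1||_2+||X_2-A_j||_2<||A_i-A_j||_2$, and then chains two triangle inequalities to deduce $||A_i-X_2||_2+||X_1-A_j||_2>||A_i-A_j||_2>\eta$, hence $(X_1,X_2)\notin S_{ij}^{21}(\eta)$. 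Your symmetric device of adding the two sublevel inequalities is neater: it avoids the detour through the $G$-sets (which is indeed irrelevant here, since only $d_{\cal N}\ge 0$ is used, not the specific linear piece $d_a$), and, as you observe, it applies verbatim on all of ${\cal N}^2$ regardless of the type of $\{L_p,L_q\}$. Substantively, though, both arguments are the same short computation.
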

\begin{proof}
Let $\eta $ be any value such that $0 \le \eta < ||A_i-A_j||_2$. To establish the result it is sufficient to prove that, for $t \in \{a, b\}$, 
$G_{ij}^{12(t)}(\eta) \cap S_{ij}^{21}(\eta) = \emptyset$. Without loss of generality, we will prove $G_{ij}^{12(a)}(\eta) \cap S_{ij}^{21}(\eta) = \emptyset$. 

Assume $(X_1, X_2) \in G_{ij}^{12(a)} (\eta)$. Then
$$g_{ij}^a(X_1, X_2)=||A_i-X_1||_2+\alpha \, d_a(X_1, X_2) + ||X_2-A_j||_2 \le
\eta < ||A_i-A_j||_2$$
This implies $||A_i-X_1||_2+||X_2-A_j||_2<||A_i-A_j||_2$. By combining this inequality with the triangular property, we can write
$$||A_i-A_j||_2 \le ||A_i-X_1||_2+||X_1-A_j||_2< ||A_i-A_j||_2-||X_2-A_j||_2+
||X_1-A_j||_2$$
By applying again the triangular property $||A_i-A_j||_2 \le ||A_i-X_2||_2+||X_2-A_j||_2$ to the right hand side, we finally obtain
$||A_i-A_j||_2 < ||A_i-X_2||_2+||X_1-A_j||_2$. Consequently
$$\eta < ||A_i-A_j||_2 < ||A_i-X_2||_2+||X_1-A_j||_2 + \alpha \min \{ d_a(X_1, X_2), 
d_b(X_1, X_2) \} = h_{ij}(X_2, X_1)$$
Since $\eta <  h_{ij}(X_2, X_1)= \min \{ g_{ij}^a(X_2, X_1), g_{ij}^b(X_2, X_1) \}$, then $(X_1, X_2) \notin G_{ij}^{21(a)}(\eta) \cup G_{ij}^{21(b)}(\eta) = S_{ij}^{21}(\eta)$, which concludes the proof. 
\end{proof}

Let $\{L_p, L_q\}$ be two linear arc segment of type 1.
For a given point $(X_1, X_2) \in L_p \times L_q$, the objective value 
$F(X_1, X_2)= \D \sum_{(i, j) \in C(X_1, X_2)} \, t_{ij}$ quantifies the amount of trip patterns captured by such a point.  Taking into account  definition~\ref{cover}, let $S_{ij} $ be the set of points which cover the  O/D pair $(i, j)$, given by
$$ S_{ij} = \{ (X_1, X_2) \in L_p \times L_q : \, f_{ij}(X_1, X_2) \le d_{ij} \}$$
The following result follows directly from the definition:
\begin{corollary} \label{coro11}
$(X_1, X_2) \in S_{ij}$ if and only if $(i, j) \in C(X_1, X_2)$
\end{corollary}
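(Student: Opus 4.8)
The plan is to unwind both sides of the biconditional directly from the definitions just introduced, so the proof is essentially a bookkeeping exercise. First I would recall that by Definition~\ref{cover}, the O/D pair $(i,j)$ is covered by $(X_1,X_2)$ precisely when $f_{ij}(X_1,X_2)\le d_{ij}$, and that by definition of $C(X_1,X_2)$ we have $(i,j)\in C(X_1,X_2)$ if and only if $f_{ij}(X_1,X_2)\le d_{ij}$. On the other hand, the set $S_{ij}$ was just defined as $\{(X_1,X_2)\in L_p\times L_q:\ f_{ij}(X_1,X_2)\le d_{ij}\}$, so membership $(X_1,X_2)\in S_{ij}$ is, by definition, the same inequality $f_{ij}(X_1,X_2)\le d_{ij}$ (with $(X_1,X_2)$ ranging over $L_p\times L_q$).

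The proof then consists of chaining these three equivalences. Assuming $(X_1,X_2)\in L_p\times L_q$, I would write: $(X_1,X_2)\in S_{ij}\iff f_{ij}(X_1,X_2)\le d_{ij}\iff (i,j)\in C(X_1,X_2)$. The first step is the definition of $S_{ij}$; the second is the definition of $C(X_1,X_2)$ together with Definition~\ref{cover}. Nothing else is needed, and in particular we do not use the decomposition $f_{ij}=\min\{h_{ij}(X_1,X_2),h_{ij}(X_2,X_1)\}$ nor any of the convexity or sublevel-set machinery; the corollary is purely a restatement tailored to the restricted feasible region $L_p\times L_q$.

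Honestly, there is no real obstacle here: the statement is a direct corollary of the definitions, as the paper itself flags (``follows directly from the definition''). The only thing to be careful about is the implicit quantification over $L_p\times L_q$ — one should state at the outset that $(X_1,X_2)$ is a point of $L_p\times L_q$, since $S_{ij}$ is defined only on that restricted domain, whereas $C(X_1,X_2)$ was originally defined on all of ${\cal N}\times{\cal N}$; the equivalence holds for every point of $L_p\times L_q$ because the defining inequality is the same in both cases. I would therefore keep the proof to one or two sentences, essentially just citing Definition~\ref{cover} and the definitions of $C(X_1,X_2)$ and $S_{ij}$.
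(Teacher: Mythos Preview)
Your proposal is correct and matches the paper's approach: the paper simply states that the corollary ``follows directly from the definition'' without giving any further argument, and your unwinding of the definitions of $S_{ij}$, $C(X_1,X_2)$, and Definition~\ref{cover} is exactly that. Your remark about the implicit restriction to $L_p\times L_q$ is a nice point of care, but no additional idea is needed.
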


In order to obtain $S_{ij}$ from the above defined sublevel sets, 
we replace the $\eta$-values by the specific acceptance level 
$d_{ij}$ associated with each O/D pair $(i, j)$.

\begin{remark}[Notation]
Given an acceptance level $0 \le d_{ij}< ||A_i-A_j||_2$,
 for $t \in \{a, b\}$ let us denote $G_{ij}^{12(t)}(d_{ij})$ and $G_{ij}^{21(t)}(d_{ij})$ by $G_{ij}^{12(t)}$ and $G_{ij}^{21(t)}$, respectively. This abbreviated notation  is also applied to the  sets
$S_{ij}^{12}(d_{ij})$ and  $S_{ij}^{21}(d_{ij})$, that is:
$S_{ij}^{12} = S_{ij}^{12}(d_{ij})$, and $S_{ij}^{21}=S_{ij}^{21}(d_{ij})$.
\end{remark}

\begin{corollary} \label{coro13}
For each O/D pair $(i, j)$, \ $S_{ij} = S_{ij}^{12} \cup S_{ij}^{21}$.
\end{corollary}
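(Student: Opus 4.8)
The plan is to unwind the definitions and reduce everything to the sublevel-set identity already proved in Lemma~\ref{lema4a}. Recall that $S_{ij}$ is defined as $\{(X_1,X_2)\in L_p\times L_q : f_{ij}(X_1,X_2)\le d_{ij}\}$, and that by definition $f_{ij}(X_1,X_2)=\min\{h_{ij}(X_1,X_2),h_{ij}(X_2,X_1)\}$. On the other hand, with the abbreviated notation of the Remark, $S_{ij}^{12}=S_{ij}^{12}(d_{ij})=\{(X_1,X_2)\in L_p\times L_q : h_{ij}(X_1,X_2)\le d_{ij}\}$ and $S_{ij}^{21}=S_{ij}^{21}(d_{ij})=\{(X_1,X_2)\in L_p\times L_q : h_{ij}(X_2,X_1)\le d_{ij}\}$.

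First I would observe that for a fixed point $(X_1,X_2)\in L_p\times L_q$, the condition $f_{ij}(X_1,X_2)\le d_{ij}$ is equivalent, by the definition of $f_{ij}$ as a minimum of two quantities, to the disjunction ``$h_{ij}(X_1,X_2)\le d_{ij}$ or $h_{ij}(X_2,X_1)\le d_{ij}$''. This is nothing more than the elementary fact that $\min\{a,b\}\le c$ holds if and only if $a\le c$ or $b\le c$. Translating this disjunction back into set membership gives precisely $(X_1,X_2)\in S_{ij}$ if and only if $(X_1,X_2)\in S_{ij}^{12}$ or $(X_1,X_2)\in S_{ij}^{21}$, i.e.\ $(X_1,X_2)\in S_{ij}^{12}\cup S_{ij}^{21}$. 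Since this holds for every point of $L_p\times L_q$, the set equality $S_{ij}=S_{ij}^{12}\cup S_{ij}^{21}$ follows.

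There is essentially no obstacle here; the corollary is a direct bookkeeping consequence of the definition of $f_{ij}$ as a pointwise minimum. The only point worth a sentence of care is to make sure the ambient feasible set is the same on both sides, namely $L_p\times L_q$ with $\{L_p,L_q\}$ of type 1, which is built into all three set definitions, so the two-sided inclusion is immediate once the pointwise equivalence is stated. If one wished, one could also phrase the argument through Lemma~\ref{lema4a} by noting $S_{ij}^{12}\cup S_{ij}^{21}=G_{ij}^{12(a)}\cup G_{ij}^{12(b)}\cup G_{ij}^{21(a)}\cup G_{ij}^{21(b)}$, exhibiting $S_{ij}$ as a union of four convex sets, but that refinement belongs to the subsequent development (the FDS construction) rather than to the proof of the identity itself.
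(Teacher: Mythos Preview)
Your proof is correct and matches the paper's approach exactly: the paper's proof is a single line observing that since $f_{ij}(X_1,X_2)=\min\{h_{ij}(X_1,X_2),h_{ij}(X_2,X_1)\}$, the result is immediate, which is precisely the ``$\min\{a,b\}\le c$ iff $a\le c$ or $b\le c$'' argument you spell out. Your additional remarks about the ambient feasible set and Lemma~\ref{lema4a} are sound but not needed for the corollary itself.
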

\begin{proof}
Since $f_{ij}(X_1, X_2) = \min \{ h_{ij}(X_1, X_2), h_{ij} (X_2, X_1) \}$, the result follows  straightforwardly.
\end{proof}

\begin{note}
We now briefly comment on the role of $S_{ij}$ in the construction of the FDS.
Let us suppose that $C(X_1, X_2)=\{ (i, j), \ldots, (k, r)\}$ is the set of O/D pairs covered by $(X_1, X_2)$. Corollary~\ref{coro11} implies that
$(X_1, X_2) \in S_{ij} \cap \ldots \cap S_{kr}$. For this reason, we next analyze the intersections of these sets, since such intersections will provide the points  to be incorporated in the FDS. In fact, due to the absence of convexity of both
$S_{ij}^{12}$ and $S_{ij}^{21}$, we will focus the effort on 
selecting points from the boundaries of all these sets (as well as from their intersections),
in order to guarantee that the selected points belong to all sets involved in the intersection.
\end{note}

Henceforth we will use  the notation $\overline{G}$ (or $\overline{S}$), to identify the boundary curve of a set $G$ (or $S$). That is, for $t \in \{a, b\}$, we have
$$ \begin{array}{c}
\overline{G}_{ij}^{12(t)} = \{ (X_1, X_2) \in L_p \times L_q :  \, g_{ij}^t(X_1, X_2) = d_{ij} \}  \\[1ex]
\overline{S}_{ij}^{12} = \{ (X_1, X_2) \in L_p, L_q : \, h_{ij}(X_1, X_2) = d_{ij} \}
 \end{array} $$
and similarly for $\overline{G}_{ij}^{21(t)}$ and $ \overline{S}_{ij}^{21}$. Clearly, for $\tau \in \{12, 21\}$, 
 $\overline{S}_{ij}^{\, \tau} \subseteq \overline{G}_{ij}^{\, \tau (a)} \cup \overline{G}_{ij}^{\, \tau (b)}$, and  $\overline{S}_{ij}^{\, \tau} = \overline{G}_{ij}^{\, \tau (a)} \cup 
\overline{G}_{ij}^{\, \tau (b)}$
if and only if the set $G_{ij}^{\, \tau (a)} \cap G_{ij}^{\, \tau (b)} $ contains at most one point.  Likewise
$$ \overline{S}_{ij} = \{ (X_1, X_2) \in L_p \times L_q : \, f_{ij} (X_1, X_2) = d_{ij} \}$$ 
Since from Proposition~\ref{propo5a}, $S_{ij}^{12} \cap S_{ij}^{21} = \emptyset$, then
$\overline{S}_{ij}=\overline{S}_{ij}^{12} \cup \overline{S}_{ij}^{21}$.

\vskip 1ex

As we have already seen, in order to construct the FDS of problem (3), we will  successively selecting several feasible points from both these boundary curves and their intersections.

\begin{lemma}\label{lema4b}
Given two different O/D pairs $(i, j)$, $(k, r)$, let ${\cal P}(ij; kr) \subset L_p \times L_q  $ be the set of intersection points
 defined as follows:
$$ {\cal P}(ij; kr) = \bigcup \left\{  (\overline{G}_{ij}^{\, \tau (t)} \cap \overline{G}_{kr}^{\, \tau' (t')}), \; \tau, \tau' \in \{12, 21\}, \; 
t, t' \in \{a, b\}  \right\} $$
Then, $\overline{S}_{ij} \cap \overline{S}_{kr} \subseteq {\cal P}(ij; kr) $.
\end{lemma}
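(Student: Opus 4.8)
The plan is to reduce the containment to the statement about the $\overline{G}$-curves via Corollary~\ref{coro13} and the earlier fact that $\overline{S}_{ij}^{\,\tau}\subseteq \overline{G}_{ij}^{\,\tau(a)}\cup\overline{G}_{ij}^{\,\tau(b)}$, and then exploit that a point lying on the boundary of both $S_{ij}$ and $S_{kr}$ must be an intersection point of two of the elementary curves. First I would take an arbitrary point $(X_1,X_2)\in\overline{S}_{ij}\cap\overline{S}_{kr}$. Using $\overline{S}_{ij}=\overline{S}_{ij}^{12}\cup\overline{S}_{ij}^{21}$ (which holds by Proposition~\ref{propo5a}, as noted just before the lemma), the point lies on $\overline{S}_{ij}^{\,\tau}$ for some $\tau\in\{12,21\}$; similarly it lies on $\overline{S}_{kr}^{\,\tau'}$ for some $\tau'\in\{12,21\}$.

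Next I would descend from the $\overline{S}$-curves to the $\overline{G}$-curves. Since $\overline{S}_{ij}^{\,\tau}\subseteq\overline{G}_{ij}^{\,\tau(a)}\cup\overline{G}_{ij}^{\,\tau(b)}$, the point $(X_1,X_2)$ lies on $\overline{G}_{ij}^{\,\tau(t)}$ for some $t\in\{a,b\}$, i.e. $g_{ij}^{t}$ evaluated at the appropriate ordering of the pair equals $d_{ij}$; likewise it lies on $\overline{G}_{kr}^{\,\tau'(t')}$ for some $t'\in\{a,b\}$. Hence $(X_1,X_2)\in\overline{G}_{ij}^{\,\tau(t)}\cap\overline{G}_{kr}^{\,\tau'(t')}$, and since $\tau,\tau'\in\{12,21\}$ and $t,t'\in\{a,b\}$, this intersection is one of the sets appearing in the union defining ${\cal P}(ij;kr)$. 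Therefore $(X_1,X_2)\in{\cal P}(ij;kr)$, which gives the desired inclusion.

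The only delicate point — and the step I would be most careful about — is verifying the inclusion $\overline{S}_{ij}^{\,\tau}\subseteq\overline{G}_{ij}^{\,\tau(a)}\cup\overline{G}_{ij}^{\,\tau(b)}$ for a \emph{single} pair, which was asserted in the text but is worth spelling out: if $h_{ij}=\min\{g_{ij}^a,g_{ij}^b\}$ takes the value $d_{ij}$ at $(X_1,X_2)$, then whichever of $g_{ij}^a,g_{ij}^b$ achieves the minimum equals $d_{ij}$ there, so the point lies on the corresponding $\overline{G}$-curve. (Note that the converse inclusion can fail, which is exactly why the text only claims ``$\subseteq$''; but for this lemma only ``$\subseteq$'' is needed, so no subtlety arises.) Everything else is a finite bookkeeping over the at most $2\times 2\times 2\times 2$ combinations of $(\tau,t,\tau',t')$, all of which are by construction among the terms of the union defining ${\cal P}(ij;kr)$; no convexity, no geometry of the specific distance formulas, and no use of Proposition~\ref{propo5a} beyond justifying $\overline{S}_{ij}=\overline{S}_{ij}^{12}\cup\overline{S}_{ij}^{21}$ is required.
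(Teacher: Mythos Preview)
Your proof is correct and follows essentially the same approach as the paper: both arguments rest on the inclusion $\overline{S}_{ij}\subseteq\bigcup_{\tau,t}\overline{G}_{ij}^{\,\tau(t)}$ (and the analogous one for $(k,r)$), then distribute the intersection over the two unions to land in ${\cal P}(ij;kr)$. The paper phrases this as a set-theoretic chain of inclusions while you argue pointwise, but the content is identical; your extra justification of why $h_{ij}=d_{ij}$ forces $g_{ij}^t=d_{ij}$ for the minimizing $t$ is a welcome elaboration of a step the paper only asserts.
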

\begin{proof}
 $\overline{S}_{ij} = 
\overline{S}_{ij}^{12} \cup \overline{S}_{ij}^{21} \subseteq
\left( \overline{G}_{ij}^{\, 12 (a)} \cup \overline{G}_{ij}^{\, 12 (b)} \right) \cup
\left( \overline{G}_{ij}^{\, 21 (a)} \cup \overline{G}_{ij}^{\, 21 (b)} \right)$, and a similar  inclusion can be obtained for $\overline{S}_{kr}$. Therefore we can write 
$$\overline{S}_{ij} \cap \overline{S}_{kr} \subseteq 
\left( \overline{G}_{ij}^{\, 12 (a)} \cup \overline{G}_{ij}^{\, 12 (b)} \cup
\overline{G}_{ij}^{\, 21 (a)} \cup \overline{G}_{ij}^{\, 21 (b)} \right)
\bigcap 
\left( \overline{G}_{kr}^{\, 12 (a)} \cup \overline{G}_{kr}^{\, 12 (b)} \cup
\overline{G}_{kr}^{\, 21 (a)} \cup \overline{G}_{kr}^{\, 21 (b)} \right) $$
Clearly, ${\cal P}(ij; kr) $ is right side of this expression  since
${\cal P}(ij; kr) $ is obtained by intersecting each set of the first group with each set of the second group. This concludes the proof. 
\end{proof}
 Figure 4   shows the sets $ {\cal P}(ij; kr)$  obtained in several cases.
For the same points $A_i(2{.}5, 6)$ and $A_j(1, -4)$ of Figure 2,  and the value $\alpha=0{.}4$, Figure 4 (a) shows the curves $\overline{G}_{ij}^{\, 12 (a)}$ and $\overline{G}_{ij}^{\, 12 (b)}$ obtained for the acceptance level $d_{ij}=10$ (the set $S_{ij}^{21}$ is empty). Likewise,  we have  considered two new points:
$A_k(-2, -4{.}5)$ and $A_r(3, 5{.}5)$, with $||A_k-A_r||\simeq 11{.}18$. 
 Figure 4 (b) and (c) display the set of intersection points $ {\cal P}(ij; kr)$ obtained by considering different acceptance levels for the pair $(k, r)$. Thus, for  $d_{kr}= 10{.}5$, the set $ {\cal P}(ij; kr)$ (Figure 4 (b)) is non-empty, and contains three intersection points, and for $d_{kr}= 9{.}8$ (Figure 4 (c)),  such a set is empty. 
 Moreover, in this last figure only a branch of curve $\overline{G}_{kr}^{\, 21(a)}$ is inside the feasible domain. Note that, in all cases, the points of $ {\cal P}(ij; kr)$ are intersections of all pairs of curves $\{ \overline{G}_{ij}^{(\cdot)}, \overline{G}_{kr}^{(\cdot)} \}$.

\bigskip

\begin{center}
\begin{tabular}{ccc}
\scalebox{0.2}[0.2]{\includegraphics{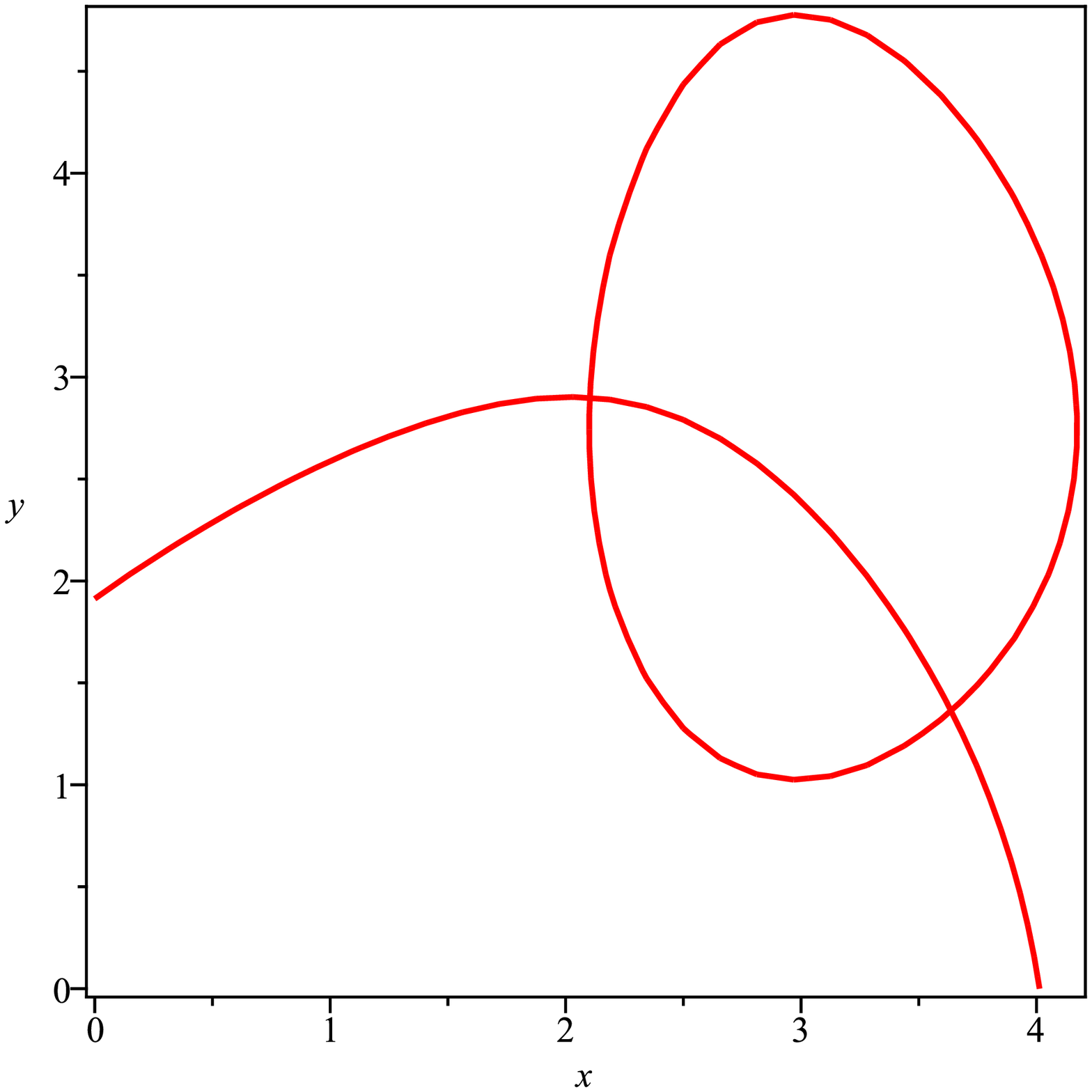}}
&
 \scalebox{0.2}[0.2]{\includegraphics{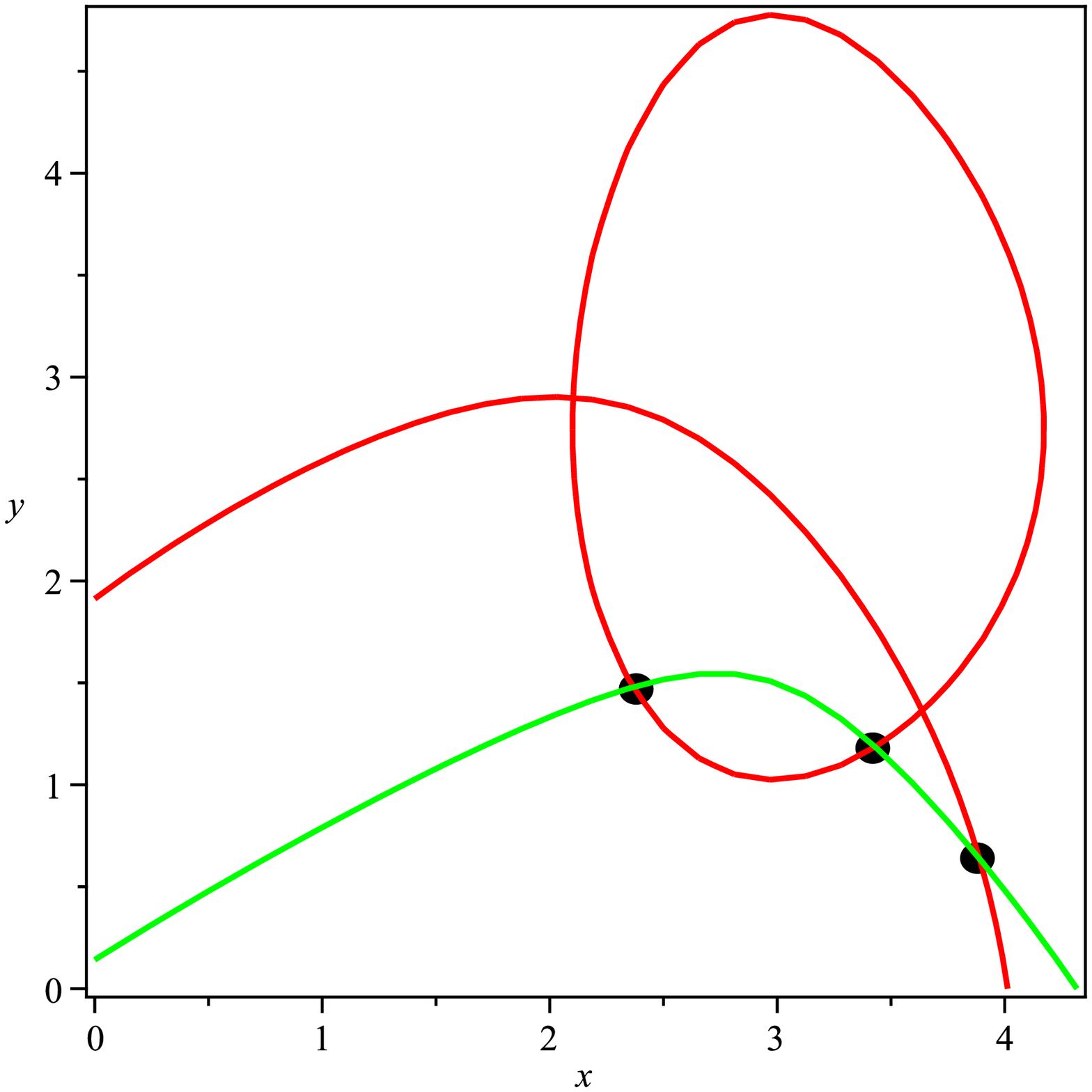}}
&
\scalebox{0.2}[0.2]{\includegraphics{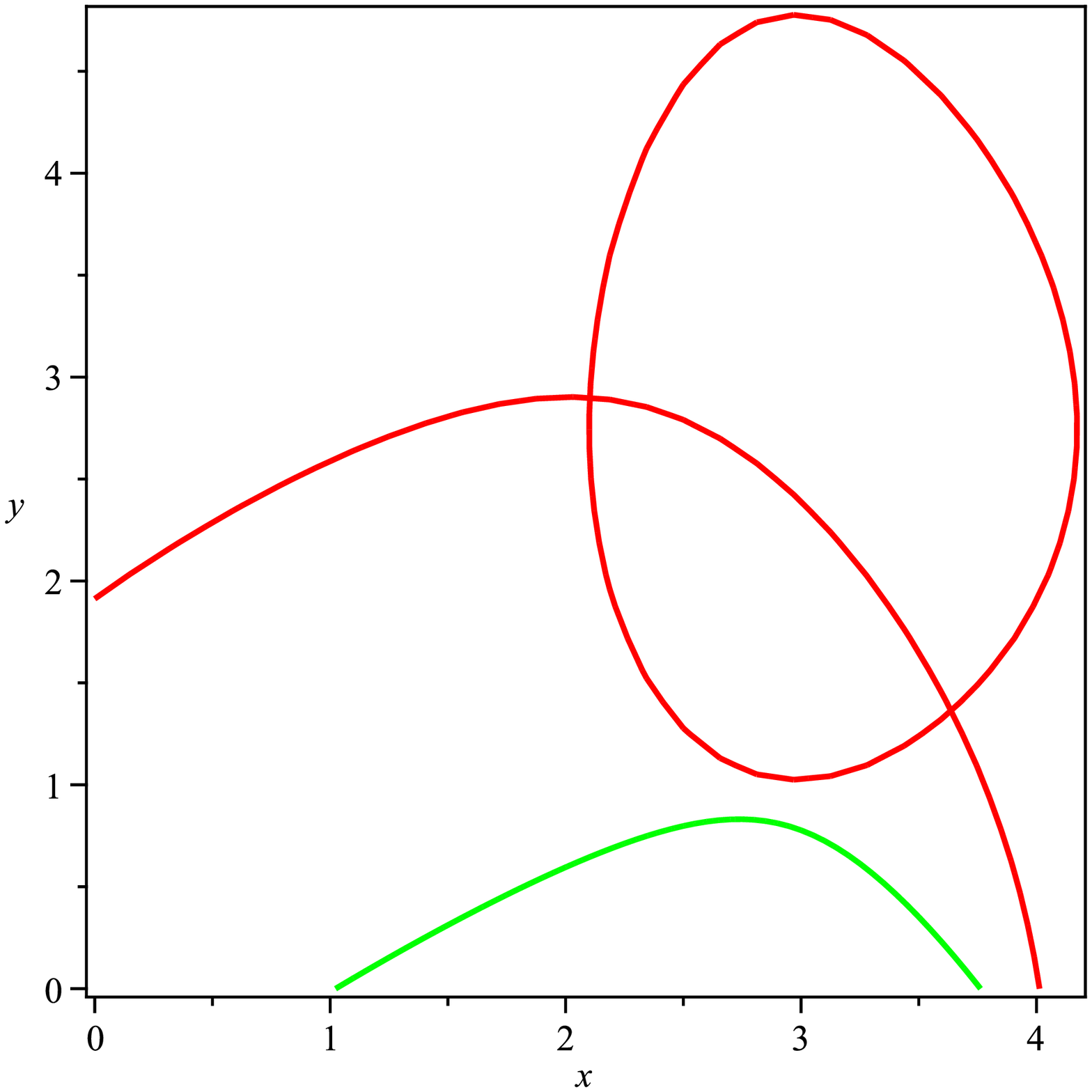}} \\
(a) & (b) & (c) 
\end{tabular}
\end{center}
Figure 4. (a): Curves  $\overline{G}_{ij}^{\, 12 (a)}$ and $\overline{G}_{ij}^{\, 12 (b)}$, for $d_{ij}=10$. (b): Set $ {\cal P}(ij; kr)$, for $d_{kr}=10{.}5$.
(c) For $d_{kr}=9{.}8$, there are no intersection points and  $ {\cal P}(ij; kr)=\emptyset$.

\bigskip

\begin{definition} \label{defi15}
For each O/D pair $(i, j)$, 
let  ${\cal Q} (i, j) = {\cal Q}^{12}(i,j) \cup {\cal Q}^{21}(i,j)$ be a set of feasible points
 of $\overline{S}_{ij}=\overline{S}_{ij}^{12} \cup \overline{S}_{ij}^{21} $, 
where for each $\tau \in \{12, 21\}$:
$$ {\cal Q}^\tau(i, j) = \left\{ \begin{array}{cl}
\overline{G}_{ij}^{\,\tau(a)} \cap \overline{G}_{ij}^{\,\tau(b)}, & \mbox{
if this intersection contains at least one feasible point} \\[0.2ex]
\{ Y_a, Y_b  \}, &  \begin{array}{l} \mbox{} \\[0.6ex]
\mbox{where $Y_a$ and $Y_b$ are feasible points  arbitrarily selected} \\
\mbox{from $\overline{G}_{ij}^{\,\tau(a)}$  
and $\overline{G}_{ij}^{\,\tau(b)}$, respectively} \end{array}
\end{array} \right. 
$$
\noindent
Clearly ${\cal Q}^\tau(i, j) \subset \overline{S}_{ij}^{\, \tau}$, for  $\tau \in \{12, 21\}$.  
 \end{definition}

\begin{theorem} \label{teo17}
For  the restricted problem (3), let ${\cal P} \subset L_p \times L_q$ be the set defined as follows
$$  \quad {\cal P} = \bigcup_{(i, j) \neq (k, r)} {\cal P}(ij; kr) $$
Then, at least one of the following three cases occurs:
\begin{enumerate}
\item Any point of $L_p \times L_q$ is an optimal  solution for problem (3).
\item There exists an O/D pair $(i, j)$ such that ${\cal Q}(i, j) $ contains (at least) one optimal solution for problem (3).
\item The set ${\cal P}$ contains (at least) one optimal solution for problem (3).
\end{enumerate} 

\end{theorem}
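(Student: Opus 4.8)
The plan is to start from an arbitrary optimal solution $(X_1^\ast,X_2^\ast)$ of problem~(3) and, exploiting the convexity of the sublevel sets $G_{ij}^{\tau(t)}$ together with Lemma~\ref{lema4a}, Lemma~\ref{lema4b} and Proposition~\ref{propo5a}, either to recognise $(X_1^\ast,X_2^\ast)$ as a point of ${\cal P}$ (case~3) or of some ${\cal Q}(i,j)$ (case~2), or to slide it along a level curve to such a point without decreasing $F$, or else to conclude that $F$ is constant on $L_p\times L_q$ (case~1). Since O/D pairs $(i,j)$ with $t_{ij}=0$ do not affect $F$, we discard them and assume $t_{ij}>0$ for all pairs. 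Write $C^\ast=C(X_1^\ast,X_2^\ast)$ and $F^\ast=F(X_1^\ast,X_2^\ast)$. If $C^\ast=\emptyset$ then $F^\ast=0$, so $F\equiv 0$ on $L_p\times L_q$ and case~1 holds; assume then $C^\ast\neq\emptyset$. By Corollaries~\ref{coro11} and~\ref{coro13}, $(X_1^\ast,X_2^\ast)\in\bigcap_{(i,j)\in C^\ast}S_{ij}$ with each $S_{ij}=S_{ij}^{12}\cup S_{ij}^{21}$ a union of the closed convex sets $G_{ij}^{\tau(t)}$ (Lemma~\ref{lema4a}).

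First I would make a preliminary reduction showing that, unless case~1 holds, the optimal point may be taken on the boundary of at least one covered set. Note that $F$ is upper semicontinuous, being the nonnegative combination $\sum t_{ij}\mathbf{1}_{S_{ij}}$ of indicators of closed sets; hence $\{F=F^\ast\}=\{F\ge F^\ast\}$ is closed and nonempty. If it equals $L_p\times L_q$ we are in case~1. Otherwise it has a point $X'$ in its relative boundary; then $F(X')=F^\ast$, while $X'$ is approached by points with $F<F^\ast$. A short argument with the trichotomy ``$X'$ in the interior of $S_{kr}$ / on $\overline S_{kr}$ / outside $S_{kr}$'' shows that some such nearby point fails to lie in a set $S_{kr}$ with $X'\in\overline S_{kr}$, so that ${\cal T}(X'):=\{(k,r):X'\in\overline S_{kr}\}\neq\emptyset$ and ${\cal T}(X')\subseteq C(X')$. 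Replacing $(X_1^\ast,X_2^\ast)$ by $X'$ and $C^\ast$ by $C(X')$, we may thus assume ${\cal T}^\ast:={\cal T}(X_1^\ast,X_2^\ast)\neq\emptyset$, ${\cal T}^\ast\subseteq C^\ast$.

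I would then split on $|{\cal T}^\ast|$. If $|{\cal T}^\ast|\ge 2$, choose distinct $(i,j),(k,r)\in{\cal T}^\ast$; then $(X_1^\ast,X_2^\ast)\in\overline S_{ij}\cap\overline S_{kr}\subseteq{\cal P}(ij;kr)\subseteq{\cal P}$ by Lemma~\ref{lema4b}, so case~3 holds. If ${\cal T}^\ast=\{(i,j)\}$, then by Proposition~\ref{propo5a} the decomposition $\overline S_{ij}=\overline S_{ij}^{12}\cup\overline S_{ij}^{21}$ is disjoint and locates $(X_1^\ast,X_2^\ast)$ in one summand, say $\overline S_{ij}^{12}$, so $g_{ij}^{a}(X_1^\ast,X_2^\ast)=d_{ij}$ or $g_{ij}^{b}(X_1^\ast,X_2^\ast)=d_{ij}$; WLOG the former, and then necessarily $g_{ij}^{b}(X_1^\ast,X_2^\ast)\ge d_{ij}$. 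If also $g_{ij}^{b}(X_1^\ast,X_2^\ast)=d_{ij}$, then $(X_1^\ast,X_2^\ast)\in\overline G_{ij}^{12(a)}\cap\overline G_{ij}^{12(b)}={\cal Q}^{12}(i,j)\subseteq{\cal Q}(i,j)$ (Definition~\ref{defi15}), i.e.\ case~2. Otherwise $g_{ij}^{b}>d_{ij}$ near $(X_1^\ast,X_2^\ast)$, where $h_{ij}=g_{ij}^{a}$, so locally $\overline S_{ij}^{12}$ coincides with the level set $\overline G_{ij}^{12(a)}=\{g_{ij}^{a}=d_{ij}\}$ of the convex function $g_{ij}^{a}$. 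Now travel along $\overline G_{ij}^{12(a)}$ from $(X_1^\ast,X_2^\ast)$: as long as one meets neither another curve $\overline S_{kr}$, $(k,r)\neq(i,j)$, nor the curve $\overline G_{ij}^{12(b)}$, one has $g_{ij}^{b}>d_{ij}=g_{ij}^{a}$, hence $h_{ij}=d_{ij}$, so $(i,j)$ stays covered; and since every $S_{kr}$ is entered or left only across $\overline S_{kr}$, no pair of $C^\ast$ is lost, so optimality forces $F\equiv F^\ast$ along the path. If the path first meets some $\overline S_{kr}$, the reached point lies in $\overline S_{ij}\cap\overline S_{kr}\subseteq{\cal P}$: case~3. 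If it first meets $\overline G_{ij}^{12(b)}$, the reached point lies in $\overline G_{ij}^{12(a)}\cap\overline G_{ij}^{12(b)}={\cal Q}^{12}(i,j)$: case~2. And if the connected arc of $\overline G_{ij}^{12(a)}$ through $(X_1^\ast,X_2^\ast)$ is traversed entirely with $F\equiv F^\ast$ and $g_{ij}^{b}>d_{ij}$ throughout, then $\overline G_{ij}^{12(a)}\cap\overline G_{ij}^{12(b)}=\emptyset$, so ${\cal Q}^{12}(i,j)=\{Y_a,Y_b\}$ with $Y_a\in\overline G_{ij}^{12(a)}$ an optimal point: case~2.

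The step I expect to be the main obstacle is controlling the geometry of the convex level curve $\overline G_{ij}^{12(a)}$ inside the rectangle $L_p\times L_q$ in this sliding argument. Because $g_{ij}^{a}$ is convex, its level set inside the rectangle is a union of arcs whose only endpoints lie on $\partial(L_p\times L_q)$, so one must argue that the arc through the optimal point actually reaches one of the finitely many designated points of ${\cal Q}(i,j)$ --- or meets another curve $\overline S_{kr}$ --- before leaving the feasible region; equivalently, one must check that the interaction of these level curves with the boundary of $L_p\times L_q$, and the possible disconnectedness of $\overline S_{ij}^{12}$ and $\overline S_{ij}^{21}$ as unions of such arcs, does not produce optimal points escaping ${\cal P}\cup\bigcup_{(i,j)}{\cal Q}(i,j)$; this is where one leans on the freedom in the choice of $Y_a,Y_b$ and, if needed, on a separate treatment of the one-dimensional faces of $L_p\times L_q$. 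The remaining ingredients --- upper semicontinuity of $F$, closedness and convexity of the $G_{ij}^{\tau(t)}$, and the bookkeeping that coverage of $C^\ast$ is preserved along each sliding path --- are routine.
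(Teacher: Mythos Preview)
Your route differs from the paper's in a way that creates the very obstacle you flag at the end. You split on $|\mathcal T^\ast|$, the number of pairs whose \emph{boundary} passes through the optimum, and then try to slide along a single level curve when $|\mathcal T^\ast|=1$. The paper instead splits on $|C^\ast|$, the number of pairs actually \emph{covered}, and works with two-dimensional regions rather than curves. When $|C^\ast|=1$, say $C^\ast=\{(i,j)\}$, optimality forces the whole set $S_{ij}$ to be optimal: every point of $S_{ij}$ covers $(i,j)$, and if some such point also lay in $S_{kr}$ with $t_{kr}>0$ one would have $F\ge t_{ij}+t_{kr}>F^\ast$ there. Since ${\cal Q}(i,j)\subseteq G_{ij}^{12(a)}\cup G_{ij}^{12(b)}\cup G_{ij}^{21(a)}\cup G_{ij}^{21(b)}=S_{ij}^{12}\cup S_{ij}^{21}=S_{ij}$, the arbitrary representatives $Y_a,Y_b$ are automatically optimal, with no sliding and no control over which arc they sit on. When $|C^\ast|>1$, the paper takes the connected component $\widehat S$ of $\bigcap_{(i,j)\in C^\ast}S_{ij}$ containing the optimum; all of $\widehat S$ is optimal by the same argument, and its boundary is assembled from pieces of at least two curves $\overline S_{ij}^{\,\tau}$, $\overline S_{kr}^{\,\tau'}$, hence contains a point of ${\cal P}$.

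In your framework, by contrast, the arc of $\overline G_{ij}^{12(a)}$ through $(X_1^\ast,X_2^\ast)$ can genuinely reach $\partial(L_p\times L_q)$ before meeting any $\overline S_{kr}$ or $\overline G_{ij}^{12(b)}$; this is easy to arrange when $|C^\ast|>1$ but $|\mathcal T^\ast|=1$, a configuration your case analysis must absorb but the paper's never sees. And you cannot ``lean on the freedom in the choice of $Y_a,Y_b$'' to rescue this: those points are fixed \emph{before} any optimum is known (that is what makes the set an FDS), and $\overline G_{ij}^{12(a)}$ may have several components inside the rectangle, with $Y_a$ on the wrong one. So the obstacle you identify is a real gap for your argument, not a technicality; the paper sidesteps it precisely by replacing one-dimensional sliding with the two-dimensional observation that an entire region is optimal.
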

\begin{proof}
Let $(\widehat{X}_1, \widehat{X}_2) \in L_p \times L_q$ and $F(\widehat{X}_1, \widehat{X}_2)$  be an optimal solution for problem (3) and the corresponding  optimal value of the objective function, respectively. That is, $F(\widehat{X}_1, \widehat{X}_2) \ge F(X_1, X_2)$, $\forall (X_1, X_2) \in L_p \times L_q$. Let $C(\widehat{X}_1, \widehat{X}_2)$ be the set of O/D pairs covered by $(\widehat{X}_1, \widehat{X}_2)$
\begin{enumerate}
\item The trivial  case $C(\widehat{X}_1, \widehat{X}_2) = \emptyset$ implies that 
$F(\widehat{X}_1, \widehat{X}_2)=0$, therefore all points of $L_p \times L_q$ are optimal.

 \item  If $C(\widehat{X}_1, \widehat{X}_2) \neq \emptyset$, then 
$(\widehat{X}_1, \widehat{X}_2)$ covers at least one O/D pair.  Since from Corollary~\ref{coro11}, $\forall
(i, j) \in C(\widehat{X}_1, \widehat{X}_2)$, $(\widehat{X}_1, \widehat{X}_2) \in S_{ij}$, we can write
$$(\widehat{X}_1, \widehat{X}_2) \in I(\widehat{X}_1, \widehat{X}_2) :=
\D \bigcap_{ (i, j) \in 
C(\widehat{X}_1, \widehat{X}_2)  }  S_{i j} $$
From the construction,  $I(\widehat{X}_1, \widehat{X}_2)$ is the set of points which cover the O/D pairs of $C(\widehat{X}_1, \widehat{X}_2)$. To prove the statements 2 and 3 of  Theorem (\ref{teo17}),  we analyze separately the cases $|C(\widehat{X}_1, \widehat{X}_2)| =1$ and
$|C(\widehat{X}_1, \widehat{X}_2)| >  1$.

\begin{enumerate}

\item If $C(\widehat{X}_1, \widehat{X}_2)= \{ (i, j)\}$, then 
$I(\widehat{X}_1, \widehat{X}_2) \subseteq S_{ij}$, and $I(\widehat{X}_1, \widehat{X}_2) \cap  S_{kr} = \emptyset$, $\forall (k, r) \neq (i, j)$. Therefore
at least one of the sets
$S_{ij}^{12}$, $S_{ij}^{21}$ is contained into $I(\widehat{X}_1, \widehat{X}_2)$.

 If both sets are contained in $I(\widehat{X}_1, \widehat{X}_2)$,  then
 $(\widehat{X}_1, \widehat{X}_2) \in I(\widehat{X}_1, \widehat{X}_2)=S_{ij}=S_{ij}^{12} \cup S_{ij}^{21}$.  Since they are disjoint sets,  $(\widehat{X}_1, \widehat{X}_2)$ is contained in one of them, suppose this set is
$S_{ij}^{12}$. Moreover, 
all points of $S_{ij}^{12}$ cover the same pairs as $(\widehat{X}_1, \widehat{X}_2)$ 
(otherwise $(\widehat{X}_1, \widehat{X}_2)$ would not be optimal), 
 i.e.  $F(\widehat{X}_1, \widehat{X}_2)=F(X_1, X_2)$, $\forall (X_1, X_2) \in S_{ij}^{12}$.  Consequently  all points of this set (including those on its boundary) are optimal. 
Both in case 
$G_{ij}^{12(a)}$ and $G_{ij}^{12(b)}$  are disjoint or not, we have
$ \overline{S}_{ij}^{12} \cap {\cal Q}^{12}(i, j) \neq \emptyset$, therefore at least one point of ${\cal Q}(i, j)$ is also an optimal solution of (3).

 If only one of the sets  is entirely contained in $I(\widehat{X}_1, \widehat{X}_2)$, suppose this set is $S_{ij}^{12}$, then $I(\widehat{X}_1, \widehat{X}_2) \cap S_{ij}^{21} = \emptyset$, and the previous reasoning can be also repeated for this situation.  This proves the second assertion.

Note that, in this case  it is not possible to have  
$(\widehat{X}_1, \widehat{X}_2) \in I(\widehat{X}_1, \widehat{X}_2) \cap S_{ij}^{21}
\subset S_{ij}^{21}$, since this would imply that
 $I(\widehat{X}_1, \widehat{X}_2) \cap S_{ij}^{21}$ is obtained from the intersection of $S_{ij}^{21}$ with (at least) one other set
 $S_{kr}$, with $(k, r) \in C(\widehat{X}_1, \widehat{X}_2)$, which is a contradiction.

\item 
Suppose  $|C(\widehat{X}_1, \widehat{X}_2) | > 1$. Since each set $S_{ij}$ involved in the intersection $I(\widehat{X}_1, \widehat{X}_2) $ is the union of two disjoint and non-convex sets,   $I(\widehat{X}_1, \widehat{X}_2) $ could consist of several disjoint subsets, and the boundary of $I(\widehat{X}_1, \widehat{X}_2)$  is obtained from pieces of boundaries of the collection $\{ \overline{S}_{ij}^{\tau}, \, (i, j) \in C(\widehat{X}_1, \widehat{X}_2), \, \tau \in \{12, 21\}  \}$.

 The optimum $(\widehat{X}_1, \widehat{X}_2)$ belongs to one of the subsets composing
$ I(\widehat{X}_1, \widehat{X}_2)$, let  $\widehat{S} \subseteq I(\widehat{X}_1, \widehat{X}_2)$ denote the maximal connected  subset containing $ (\widehat{X}_1, \widehat{X}_2)$. That is, if for instance $(i, j), (k, r) \in C(\widehat{X}_1, \widehat{X}_2)$, then
$I(\widehat{X}_1, \widehat{X}_2) \subseteq (S_{ij}^{12} \cap S_{kr}^{12}) \cup  (S_{ij}^{12} \cap S_{kr}^{21})  \cup
(S_{ij}^{21} \cap S_{kr}^{12}) \cup (S_{ij}^{21} \cap S_{kr}^{21})$, consequently 
$\widehat{S}$ would be  contained in one of these four sets. All points of $\widehat{S}$ are also optimal, and the boundary of $\widehat{S}$ is composed by pieces of boundaries of the collection   
 above described. Then the boundary of $\widehat{S} $ contains at least one intersection point of two curves  $\overline{S}_{ij}^{\tau}$,  $\overline{S}_{kr}^{\tau'}$ composing  the boundary of  $I(\widehat{X}_1, \widehat{X}_2)$
 (with $\tau, \tau' \in \{12, 21\}$).  From Lemma~\ref{lema4b}, the set ${\cal P}(ij;kr)$ contains all intersection points of   
$\overline{S}_{ij}^{\tau} \cap \overline{S}_{kr}^{\tau'}$. Therefore, 
$\widehat{S} \cap {\cal P} \neq \emptyset$, which is the third statement of the theorem.  This concludes the proof.

\end{enumerate}
\end{enumerate}

\end{proof}

\begin{corollary} \label{coro18}
Let ${\cal Q} \subset L_p \times L_q$ be the set defined as
${\cal Q} = \D \bigcup_{(i, j)} \, {\cal Q} (i, j) $, and let $X_{pq} $ be an arbitrary point selected from  $L_p \times L_q$
 such that $X_{pq} \notin {\cal Q} \cup {\cal P}$. 
Then the set
${\cal Q} \cup {\cal P} \cup \{ X_{pq} \}$ is a $\mbox{\rm FDS}$  for the restricted problem (3).
\end{corollary}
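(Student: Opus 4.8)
The plan is to obtain the corollary directly from Theorem~\ref{teo17}, the only substantive addition being a finiteness check, so I would first verify that ${\cal Q}\cup{\cal P}\cup\{X_{pq}\}$ is finite. The index sets are finite: there are at most $n^2$ ordered O/D pairs, hence finitely many sets ${\cal Q}(i,j)$ and finitely many sets ${\cal P}(ij;kr)$, each of the latter a finite union of the sixteen intersection sets $\overline G_{ij}^{\,\tau(t)}\cap\overline G_{kr}^{\,\tau'(t')}$. Each ${\cal Q}^\tau(i,j)$ has at most two points: in the branch ${\cal Q}^\tau(i,j)=\{Y_a,Y_b\}$ this is immediate, and in the branch ${\cal Q}^\tau(i,j)=\overline G_{ij}^{\,\tau(a)}\cap\overline G_{ij}^{\,\tau(b)}$ one uses that $d_a$ and $d_b$ are affine in $(x,y)$ with distinct linear parts (inspect the formulas coming from Lemmas~\ref{lema6} and~\ref{lema7}), so $\{d_a=d_b\}$ is a line; every point of that intersection satisfies $g_{ij}^a=g_{ij}^b=d_{ij}$, hence lies on this line, and the restriction of the convex function $g_{ij}^{\tau(a)}$ to the line attains the value $d_{ij}$ at most twice. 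For ${\cal P}$, each $\overline G_{ij}^{\,\tau(t)}\cap\overline G_{kr}^{\,\tau'(t')}$ is the intersection of the level curves of two functions of the form (Euclidean term in $x$) $+$ (Euclidean term in $y$) $+$ (affine in $(x,y)$); clearing the two square roots shows each such curve lies on an algebraic curve of degree at most four in $(x,y)$, so two of them meet in finitely many points unless they share a component, and the latter degeneracy forces $(i,j)$ and $(k,r)$ to have essentially identical covering sets, so one pair may then be discarded (equivalently, the degeneracy is removed by a generic perturbation of the data). Hence ${\cal Q}$ and ${\cal P}$ are finite, and so is ${\cal Q}\cup{\cal P}\cup\{X_{pq}\}$; moreover a point $X_{pq}\in L_p\times L_q$ with $X_{pq}\notin{\cal Q}\cup{\cal P}$ exists because $L_p\times L_q$ is infinite while ${\cal Q}\cup{\cal P}$ is finite.

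Next I would establish the domination property. Fix an optimal solution $(\widehat X_1,\widehat X_2)$ of the restricted problem~(3). By Theorem~\ref{teo17}, at least one of three cases holds. In case~1 every point of $L_p\times L_q$ is optimal, so in particular $X_{pq}$ is optimal, and $X_{pq}\in{\cal Q}\cup{\cal P}\cup\{X_{pq}\}$. In case~2 there is an O/D pair $(i,j)$ such that ${\cal Q}(i,j)$ contains an optimal solution, and ${\cal Q}(i,j)\subseteq{\cal Q}\subseteq{\cal Q}\cup{\cal P}\cup\{X_{pq}\}$. In case~3 the set ${\cal P}$ contains an optimal solution, and ${\cal P}\subseteq{\cal Q}\cup{\cal P}\cup\{X_{pq}\}$. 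Thus in every case ${\cal Q}\cup{\cal P}\cup\{X_{pq}\}$ contains an optimal solution of~(3); being finite, it is a finite dominating set for~(3), which is the assertion.

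The genuinely non-trivial point is the finiteness of the bisector-type intersections $\overline G_{ij}^{\,\tau(t)}\cap\overline G_{kr}^{\,\tau'(t')}$; I expect the cleanest route is the bounded-degree argument sketched above, together with an explicit remark disposing of (or excluding) two coincident level curves. Everything else is a routine case analysis driven by the three-way conclusion of Theorem~\ref{teo17}.
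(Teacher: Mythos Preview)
Your argument is correct and matches the paper's own treatment: the paper states Corollary~\ref{coro18} without proof, regarding the domination property as an immediate consequence of the trichotomy in Theorem~\ref{teo17}, and handles finiteness separately in the complexity discussion that follows, invoking B\'ezout's theorem (with a bound of~$12$ per pair of curves, citing~\cite{Kir,KorMesa11}) rather than working out the degree explicitly as you do. Your observation that ${\cal Q}^\tau(i,j)$ has at most two points is a sharper bound than the paper bothers to record, and your remark about coincident level curves is a genuine technicality that the paper simply does not address; otherwise the two arguments are the same in substance and structure.
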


Saving some technical questions dealing with the bi-dimensional representation of  Problem (3) by means of a parametrization of linear arc segments,  the algorithm for solving (3) is based on progressively constructing the FDS, and then evaluating the objective function $F$ on the points of the FDS for finally selecting the best solution. 
At the end of  the preprocessing phase aforementioned in which the ordered sequence 
${\cal L}(e)$ of all linear arc segments of each edge $e$ is computed, we can know if two linear arc segments $L_p \in {\cal L}(e_p)$ and  $L_q \in {\cal L}(e_q)$ are, or not, antipodal to each other. In case of having a pair $\{ L_p, L_q\}$ of type 1, the distance 
$d(X_1, X_2)$, for $(X_1, X_2) \in L_p \times L_q$ can be established from
Lemmas~\ref{lema6} and~\ref{lema7} in constant time, without increasing the complexity of the previous phase. Finally we point out that, in the description of the algorithm, all points of the FDS are incorporated to the set $\Omega$.

\vskip 2ex
\noindent Algorithm for problem (3)
\begin{enumerate}
\item Set  ${\Omega}:= \emptyset$.
\item For each O/D pair $(i, j)$, {\bf do}
\begin{enumerate}
\item  Obtain the level curves
$\overline{G}_{ij}^{\, 12 (a)}$, $\overline{G}_{ij}^{\, 12 (b)}$, 
$\overline{G}_{ij}^{\, 21 (a)}$, and $\overline{G}_{ij}^{\, 21 (b)}$.
\item  For $\tau \in \{12, 21 \}$, compute the intersection set $ {\cal Q}^{\tau}(i, j)=\overline{G}_{ij}^{\, \tau (a)} \cap \overline{G}_{ij}^{\, \tau (b)}$. 
\begin{enumerate}
     \item  If $ {\cal Q}^{\tau}(i, j) \neq \emptyset$, then $\Omega:= \Omega
\cup {\cal Q}^{\tau} (i, j) $. 
 \item Otherwise, set $\Omega:= \Omega \cup \{ Y_a, Y_b\}$, where 
$Y_a$ and $Y_b$ are arbitrary points selected from $\overline{G}_{ij}^{\, \tau (a)}$ and $\overline{G}_{ij}^{\, \tau (b)}$, respectively.
 \end{enumerate}
\end{enumerate}
\item For each different O/D pairs $(i, j)$ and $(k, r)$, {\bf do}
\begin{enumerate}
\item Obtain ${\cal P}(ij;kr)$, the set of intersection points of all pairs of level curves $\{ \overline{G}_{ij}^{\, \tau (t)}, \, \overline{G}_{kr}^{\, \tau' (t')} \}$, for $\tau, \tau' \in \{ 12, 21 \}$ and $t, t' \in \{a, b\}$.
 \item Set ${\Omega}:= {\Omega} \cup {\cal P}(ij;kr)$. 
\end{enumerate}
\item Choose any point $X_{pq} \in L_p \times L_q$ such that $X_{pq} \notin {\Omega}$.  Set $\Omega:=\Omega \cup \{ X_{pq} \}$. 
\item Evaluate the objective function at each point of $\Omega$, and 
select as solution a point $(X_1^{*}, X_2^{*}) $ for which
$F( X_1^{*}, X_2^{*})=\D \max_{(X_1, X_2) \in \Omega} \, F(X_1, X_2)$
(there can be several solutions) 
\end{enumerate}

To discuss the resulting  complexity of this algorithm we first analyze the cardinality of the sets  composing the FDS.
We previously point out that, as it is usual in origin-destination problems, henceforth we will use  $N = O(n^2)$ to denote the number of O/D pairs in order to reflect the complexity as a function of $N$ (instead of as a function of the number $n$ of isolated facilities).

In this process, the main computational effort  is spent  in computing the $O(N^2)$ sets of intersection points ${\cal P}(ij; kr)$.
Each $\overline{G}_{ij}^{\, \tau (t)}$ is the boundary curve of the convex set
$G_{ij}^{\, \tau (t)}$, which is the sublevel set of the convex  function $g_{ij}^{t}(\cdot)$. 
The algebraic structure of these functions allows to establish (by means of Bezout's theorem),  that
for $\tau, \tau' \in \{12, 21\}$ and $t, t' \in \{a, b\}$,  the number of intersection points of two different boundary curves $\overline{G}_{ij}^{\tau (t)}$,
$\overline{G}_{kr}^{\tau' (t')}$ is (upper) bounded by 12 (see~\cite{Kir, KorMesa11}).
Since each ${\cal P}(ij; kr)$ is obtained from the intersection of (at most) 16 pairs of curves, the complexity of ${\cal P}$ (the  number of all intersection points computed in Step 3) is $O(N^2)$. 
From the same argument, the cardinality of ${\cal Q}$ (computed in step 2) is also $O(N^2)$, consequently  $|\Omega| \in O(N^2)$.  Moreover, evaluating the objective function $F$ in each point of $\Omega$ requires $O(N)$ time, which gives a final complexity of $O( N^3)$ for solving the restricted problem (3).

 \subsection{The convex case: The restricted problem of type 2}
We now study the restricted problem (4), given by
$$    \begin{array}{ll} \max & F(X_1, X_2) := \D \sum_{(i, j) \in C(X_1, X_2)} t_{ij} \\[4ex]
\mbox{s.t.} &  (X_1, X_2) \in L_p \times L_q   \\[1ex]
\mbox{} &  L_p, L_q \in {\cal L}, \; \{L_p, L_q\} \; \mbox{of type 2}
\end{array}  \eqno(4)$$
When $(X_1, X_2) \in L_p \times L_q$,   and $\{ L_p, L_q\}$ are two linear arc segments of type 2, from lemmas~{\ref{lema6} and \ref{lema7}} and by replacing   $P $ and $Q $  by $X_1$ and $X_2$, respectively,
 the distance $d(X_1, X_2)$ is either linear or convex (according with $L_p, L_q$ lye in different edges or in the same edge). More specifically, and following with the notation used for problem (3), in this case we have
$$ d_a((X_1, X_2) = d_b (X_1, X_2)  = d(X_1, X_2) $$
Consequently, all results of previous section are valid for problem (4) tacking into account that we now have:
$$S_{ij}^{12}=G_{ij}^{12(a)}=G_{ij}^{12(b)},
\; \mbox{and} \; S_{ij}^{21}=G_{ij}^{21(a)}=G_{ij}^{21(b)}$$
Therefore, from Proposition~\ref{propo5a} and Corollary~\ref{coro13},
 both $S_{ij}^{12}$ and $S_{ij}^{21}$ are convex and disjoint sets such that
$S_{ij}=S_{ij}^{12} \cup S_{ij}^{21}$. Likewise, for this problem the set  ${\cal Q} (i, j)$ of Definition~\ref{defi15} becomes on the set ${\cal Q}(i, j)= \{ Y, Y' \}$, where $Y$ and $Y'$ are points arbitrarily selected from $\overline{S}_{ij}^{12}$ and $\overline{S}_{ij}^{21}$, respectively. And for each two different O/D pairs $(i, j), (k, r)$, the set ${\cal P}(ij; kr)$
of Lemma~\ref{lema4b} is now given by
$$ {\cal P}(ij;kr) = \D \bigcup_{(i, j) \neq (k, r)}  (  \overline{S}_{ij} \cap \overline{S}_{kr} ) =
\{ \overline{S}_{ij}^{12} \cap \overline{S}_{kr}^{12} \}
 \cup  \{ \overline{S}_{ij}^{12} \cap \overline{S}_{kr}^{21} \} \cup
\{ \overline{S}_{ij}^{21} \cap \overline{S}_{kr}^{12} \}
 \cup \{ \overline{S}_{ij}^{21} \cap \overline{S}_{kr}^{21}\}
$$
With these sets Theorem~\ref{teo17} and Corollary~\ref{coro18} are also valid for problem (4) and establishes that ${\cal Q} \cup {\cal P} \cup \{X_{pq} \} $  is the FDS for this problem. Besides, by introducing some slight modifications, the previous Algorithm for problem (3) can be applied for solving problem (4). These modifications consist on replacing steps 2 and 3 for the following ones:

\begin{enumerate}
\setcounter{enumi}{1}
\item For each O/D pair $(i, j)$, {\bf do}
\begin{enumerate}
\item Obtain the level curves $\overline{S}_{ij}^{12}$ and $\overline{S}_{ij}^{21}$
\end{enumerate}
\item For each different  O/D pairs $(i, j)$ and $(k, r)$, {\bf do}
\begin{enumerate}
\item For each $\tau, \tau' \in \{ 12, 21 \}$, compute $\overline{S}_{ij}^{\tau} \cap \overline{S}_{kr}^{\tau'}$.
\begin{enumerate}
\item If $\overline{S}_{ij}^{\tau} \cap \overline{S}_{kr}^{\tau'} \neq \emptyset$, then
set $\Omega:= \Omega \cup \overline{S}_{ij}^{\tau} \cap \overline{S}_{kr}^{\tau'}$
\item Otherwise, set $\Omega:= \Omega \cup \{ Y_{ij}, Y_{kr} \}$, where
$Y_{ij}, Y_{kr}$ are points arbitrarily selected from $\overline{S}_{ij}^{\tau}$ and
$\overline{S}_{kr}^{\tau'}$, respectively.
\end{enumerate}
\end{enumerate}
\end{enumerate}

These modifications does not reduce the complexity $O(N^3)$  obtained for problem (3), 
 since the cardinal of set $P(ij;kr)$ has, in this case, the same complexity $O(N^2)$ discussed in the previous section. In effect, since for problem (4) we also have $O(N^2)$ sets $P(ij; kr)$, with $|P(ij; kr)| \le 48$ (each intersection $\overline{S}_{ij}^{\tau} \cap \overline{S}_{kr}^{\tau'}$ has at most 12 points).

\begin{remark}
Procedure for solving the restricted problem of type 2 can also be viewed as an adaptation of algorithm described in~\cite{KorMesa11} for solving the restricted problem on a tree network, which supposes that $L_p, L_q$ assume the roles of a pair of edges of the tree. If $\{L_p, L_q\}$ is a pair of linear arc segments of type 2, 
 it is possible to obtain from the network ${\cal N}$ an associated subtree $T'_{pq}$ by deleting all edges and subedges not involved in the shortest distance between points of $L_p$ and $L_q$.  If in such a subtree we consider the extreme points of $ L_p$ and $L_q$ as nodes, then the  linear arc segments  becomes edges of the tree, and the restricted problem (4) 
is an O/D 2-location problem on an edge-pair of a tree.
\end{remark}

\subsection{Complexity of the problem}

All these reasonings implies that  restricted problem (2) can be solved in $O(N^3)$ time (independently of the type of problem). Tacking into account that  
there are at most $|V| |E|$  linear arc segments in the network, the number of pairs of linear arc segments is  $O( |V|^2 |E|^2)$ (which is
 the number of  restricted problems (2) to be solved). This finally gives a complexity of   $O(|V|^2 |E|^2 N^3)$ time for solving the initial problem (1) on the overall network
(where $N =O(n^2)$ is the number of O/D pairs).

\section{Conclusions and further research}
Given a set of origin-destination pairs in the Euclidean plane, the problem of locating two transfer points on  a embedded in the plane high-speed network so that the number of covered pairs would be maximized, has been analyzed and solved in this paper. Given the lack of convexity of the mixed plane-network distances the applied approach has consisted into the decomposition of the set of feasible solutions into smaller regions in which the mixed distances are either concave or convex. This decomposition gives rise to two different restricted problems which has been analyzed and a finite dominating set for each case derived. In such a way the problem can be solved by a polynomial time algorithm which has been designed. 

Further research, still on progress, consists in applying this approach to the problem of maximizing the additional coverage regarding a set of already located transfer points on the network, i.e. the conditional location counterpart problem. An improvement in the way of approaching the model to the problem of locating new stations on a transportation network is to introduce acceleration and deceleration between transfer points. This problem also deserves further research. We note that since the computational  complexity of the corresponding problem of locating more than two transfer points  is high other methodologies as the Big Cube Small Cube algorithm~\cite{SchoScholz} could be useful for this extension.

\bigskip

\noindent{\large \bf Acknowledgement:} 
This work was partially supported by Ministerio de Educaci\'on, Ciencia e Innovaci\'on (Spain)/FEDER  under project MTM2009-14243, Ministerio de Econom\'{\i}a y Competitividad under project MTM2012-37048 and by Junta de Andaluc\'{\i}a (Spain)/FEDER under excellence projects P09-TEP-5022 and P10-FQM-5849.


\begin{thebibliography}{123}

\bibitem{Boyd}    
{\sc S.P. Boyd and L. Vanderberghe}. {\it Convex Optimization}. Cambridge University Press, 2004.

\bibitem{Haetal}
{\sc H. Hamacher,  A. Liebers,   A. Sch\"{o}bel,  D. Wagner,  F. Wagner}. {\it Locating new stops in a railway network}. Electronic Notes Theoretical Computers Science {\bf 50} (2001), 1-11

\bibitem{HoGarChen91}
{\sc J.J. Hooker, R.S. Garfinkel, C.K. Chen}. {\it Finite dominating sets for network location problems}. Operations Research {\bf 39} (1991), 100-118.

\bibitem{Kir}
{\sc F. Kirwan}. {\it Complex Algebraic Curves}. Cambridge University Press, 1992.

\bibitem{KorMesa11}
{\sc M-C. K\"{o}rner, J.A. Mesa, F. Perea, A. Sch\"{o}bel, D. Scholz}.
{\it A Maximum trip covering location problem with alternative mode of transportation on tree networks and segments}. TOP (2012):  DOI: 10.1007/s11750-012-0251-y. 

\bibitem{LaMeOr}
{\sc G. Laporte, J.A. Mesa, F.A. Ortega}. {\it Locating stations on rapid transit lines}. Computers \& Operations Research {\bf 29} (2002), 741-759.

\bibitem{LaMeOrSe}
{\sc  G. Laporte, J.A. Mesa,  F.A. Ortega,  I. Sevillano}. {\it Maximizing trip coverage in the location of a single rapid transit line alignment}. Annals of Operations Research {\bf 136} (2005),  49-61.

  
\bibitem{ReAnChurch}
{\sc  H.M. Repolho,  A.P. Antunes, R.L. Church}. {\it Optimal location of railway stations: The Lisbon-Porto High-Speed rail Line}. Transportation Science  {\bf 47} (2013), 330-343.

 \bibitem{Rockafeller}
{\sc R.T. Rockafeller}. {\it Convex Analysis}. Princeton University Press, 1970.

\bibitem{Scho}
{\sc A. Sch\"{o}bel}. {\it Locating stops along bus or railway lines. A bicriteria problem}. Annals of Operations Research {\bf 136} (2005), 211-227.

\bibitem{Schoetal}
{\sc  A. Sch\"{o}bel, H. Hamacher, A. Liebers, D. Wagner}. {\it The continuous stop location problem in public transportation networks}. Asia-Pacific Journal of Operational Research {\bf 26} (2009), 13-30.

\bibitem{SchoScholz}
 {\sc  A. Sch\"{o}bel,   D. Scholz}. {\it The Big Cube Small Cube Solution Method for Multidimensional Facility Location Problems}. Computers \& Operations Research {\bf 37} (2010),  115-122.


\bibitem{VuNe}
{\sc V.R. Vuchic, G.F. Newell}.  {\it Rapid transit interstation spacing for minimum travel time}. Transportation Science {\bf 2} (1968), 303-309.

\bibitem{Vu}
 {\sc V.R. Vuchic}.  {\it Rapid transit interstation spacings for maximum number of passengers}. Transportation Science {\bf 3} (1969), 214-232.

\end{thebibliography}
\end{document}